\newtheorem{thm}{Theorem}[section]
\newtheorem{defin}[thm]{Definition}
\newtheorem{lem}[thm]{Lemma}
\newtheorem{rem}{Remark}[section]
\newtheorem{condition}{Condition}
\makeatletter \@addtoreset{equation}{section} \makeatother
\newcommand{\N}{\mathbb{N}}
\newcommand{\R}{\mathbb{R}}
\newcommand{\PP}{\mathbb{P}}
\newcommand{\E}{\mathbb{E}}
\newcommand{\Var}{\mathbb{V}\mathrm{ar}}
\newcommand{\leqdef}{\vcentcolon=}
\newcommand{\ind}{\mathds{1}}
\newcommand{\ii}{\mathrm{i}}
\newcommand{\bb}[1]{\boldsymbol{#1}}
\begin{document}

    \articletype{Research Article{\hfill}Open Access}

    \title{\huge Counterexamples to the classical central limit theorem for triplewise independent random variables having a common arbitrary margin}
    \runningtitle{Counterexamples to the classical CLT}

    \author[1]{Guillaume Boglioni Beaulieu}%
    \author[2]{Pierre Lafaye de Micheaux}%
    \author*[3]{Fr\'ed\'eric Ouimet}%

    \runningauthor{G.\ Boglioni, P.\ Lafaye de Micheaux and F.\ Ouimet}

    \affil[1]{
    UNSW Sydney, NSW 2052, Australia. E-mail: g.boglioni@unsw.edu.au}%
    \affil[2]{
    UNSW Sydney, NSW 2052, Australia. E-mail: lafaye@unsw.edu.au ;
        Desbrest Institute of Epidemiology and Public Health, Univ Montpellier, INSERM, Montpellier, France ; AMIS, Université Paul Valéry Montpellier 3, France}%
    \affil[3]{
    McGill University, Montreal, QC, Canada. E-mail: frederic.ouimet2@mcgill.ca}%

    \maketitle

    \begin{abstract}a 
        \noindent
        {\bf Abstract: } We present a general methodology to construct triplewise independent sequences of random variables having a common but arbitrary marginal distribution $F$ (satisfying very mild conditions). For two specific sequences, we obtain in closed form the asymptotic distribution of the sample mean. It is  \textit{non-Gaussian} (and depends on the specific choice of $F$). This allows us to illustrate the extent of the `failure' of the classical central limit theorem (CLT) under triplewise independence. Our methodology is simple and  can also be used to create, for any integer $K$, new $K$-tuplewise independent sequences that are not mutually independent. For $K \geq 4$, it appears that the sequences created using our methodology \textit{do} verify a CLT, and we explain heuristically why this is the case.
    \end{abstract}

    \keywords{central limit theorem, graph theory, mutual independence, non-Gaussian asymptotic distribution, triplewise independence, variance-gamma distribution}

    \DOI{DOI}
    \startpage{1}

    \journalyear{2021}

\section{Introduction}\label{sec:introduction}

\textit{Independence} is a fundamental concept in probability.
When speaking of `independence', one generally means \textit{mutual} independence, as opposed to \textit{pairwise} independence, or, in general, `$K$-tuplewise independence' ($K \geq 2)$. Recall that a collection of random variables (defined on the same probability space) are mutually independent, or just independent, if they are $K$-tuplewise independent for all positive integers $K$.

While mutual independence implies $K$-tuplewise independence (for any $K$), the converse is not true.
For the case $K=2$ (`pairwise independence'), several counterexamples can be found in the literature, see, e.g., \citet{MR4208980} for a recent survey. For instance, one can define the following simple example
\begin{equation}
X_{3j+1} = Y_j, \qquad X_{3j+2}=Z_j, \qquad X_{3j+3} = Y_j Z_j \qquad\text{ for }j=0,1,\ldots
\end{equation}
where $Y_0,Z_0,Y_1,Z_1,\ldots$ are independent and identically distributed (i.i.d.) with $P(Y_0=1)=P(Y_0=-1)=1/2$.
Building examples of $K$-tuplewise independent variables which are not mutually independent for $K = 3$ (henceforth `triplewise independence') or $K \geq 4$ is not easy, and such examples are   scarce. This may explain why we still have an incomplete understanding of which fundamental theorems of mathematical statistics `fail' under this weaker assumption (and to what extent).
By a well known result of \citet{etemadi1981}, the classical strong law of large numbers does hold for any pairwise independent and identically distributed sequence $\{X_n\}_{n\geq 1}$ such that $\E|X_1| < \infty$.

The same is not true, though, of the classical CLT, arguably one of the most important results in all of statistics. Few authors have studied this question. \citet{pruss1998} showed that, for any integer $K$, one can build a sequence of $K$-tuplewise independent  r.v.s for which no CLT holds. \citet{bradley2009} further showed that even if such a  sequence is \textit{strictly stationary}, a CLT need not hold. \citet{weakley2013} extended this work by allowing the r.v.s in the sequence to have any symmetrical distribution (with finite variance). \citet{takeuchi2019} showed that $K$ growing linearly with the sample size $n$ is not even sufficient for a CLT to hold. In those examples, however, the asymptotic distribution of the sample mean $S_n$ is not given explicitly, hence we cannot judge to what extent it departs from normality.

\citet*{kantorovitz2007} does provide an example of a triplewise independent sequence for which $S_n$ converges to a `misbehaved' distribution ---that of $Z_1 \cdot Z_2$, where $Z_1$ and $Z_2$ are independent $N(0,1)$ --- but this is achieved for a very specific choice of margin, namely the Bernoulli distribution.

In Section~\ref{sec:construction}, we present a methodology, borrowing elements from graph theory, to construct new sequences of {\it triplewise} independent and identically distributed (noted thereafter t.i.i.d.) r.v.s\ whose common marginal distribution $F$ can be chosen arbitrarily (under very mild conditions). In  Section~\ref{sec:asymptotic-distribution-of-the-mean}, we provide a necessary and sufficient condition for a CLT to hold for such   sequences.

In Section~\ref{sec:properties-of-s}, we provide what we believe to be the first two examples of triplewise independent sequences with arbitrary margins for which the asymptotic distribution of the standardized sample mean is explicitly known and non-Gaussian. Those two distributions depend on the choice of the margin $F$ and have heavier tails than a Gaussian. This allows us to assess how far away from the Gaussian distribution one can get under sole {\it triplewise} independence. This work thus highlights why {\it mutual} independence is so fundamental for the classical CLT to hold.

Lastly, in Section~\ref{sec:why.cannot.generalize}, we explain how our methodology can easily be extended to create new $K$-tuplewise independent sequences (which are not mutually independent) for any integer $K$. While such sequences are interesting in themselves, it appears that for $K \geq 4$ they \textit{do} verify a CLT, and we explain heuristically why this is the case. Despite  not being the focus of this paper, we note that these sequences could prove  useful to benchmark the performance of multivariate independence tests, many of which have been proposed in recent years, see, e.g., \citet{fan2017, jin2018, yao2018, bottcher2019, chakraborty2019, genest2019, drton2020}.

\section{Construction of triplewise independent sequences }\label{sec:construction}

In this section, we present a general methodology to construct sequences $\{X_j\}_{j \geq 1}$ of t.i.i.d.\ r.v.s having a common (but arbitrary) marginal distribution $F$ satisfying the following condition:

\begin{condition}\label{cond:F}
    $F$ has finite variance and for any r.v.\ $W\sim F$, there exists a Borel set $A$ with $\PP(W\in A) = \ell^{-1}$, where $\ell \geq 2$ is an integer.
\end{condition}

We begin our construction of the sequence $\{X_j\}_{j \geq 1}$ by letting $F$ be a distribution satisfying Condition~\ref{cond:F}, with mean and variance denoted by  $\mu$ and $\sigma^2$, respectively. For a r.v.\ $W \sim F$,
let $A$ be any Borel set such that
\begin{equation}\label{eq:w.A}
    \PP(W\in A) = \ell^{-1}, \quad \text{for some integer } \ell\geq 2.
\end{equation}
Our construction relies on a sequence of simple graphs $\{G_m\}_{m \geq 1}$ with two properties:
\begin{enumerate}
    \item The girth of $G_m$ is 4 (or larger), for all $m$;
    \item The number of edges of $G_m$ grows to infinity as $m \to \infty$.
\end{enumerate}
Aside from these properties, the sequence $\{G_m\}_{m \geq 1}$ is left unspecified, making our construction very general. As a concrete example, consider a {\it complete bipartite graph} composed of two sets of $m$ vertices, where every vertex from one set is linked by an edge to every vertex in the second set; see Figure~\ref{fig:K_4_4} with $m=4$ for an illustration. Such graphs are often denoted by $K_{m,m}$, see, e.g, \citet[p.17]{MR2159259}.

\begin{figure}[ht]
    \centering
    \includegraphics[width=50mm]{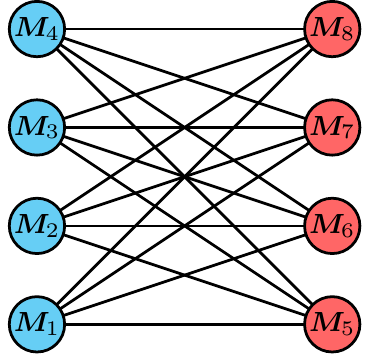}
    \caption{Graph $K_{4,4}$ with uniform r.v.s $M_j, \, 1 \leq j \leq 8,$ defined in \eqref{eq:uniform} assigned to the vertices. The vertices on the left (colored in blue) belong to one set while the vertices on the right (colored in red) belong to another set.}
        \label{fig:K_4_4}
\end{figure}

Let $v(m)$ be the number of vertices of $G_m$ and let $M_1, \dots, M_{v(m)}$ be a sequence of i.i.d.\ discrete uniforms on the set $\{1,2,\dots,\ell\}$, defined on a common probability space $(\Omega, \mathcal{F}, \mathbb{P})$. Precisely, for $i = 1,2, \dots, \ell$, let
\begin{equation}\label{eq:uniform}
    p_i \leqdef \mathbb{P}(M_1 = i) = \ell^{-1}.
\end{equation}

\noindent
Assign the uniform r.v.s $M_1, \dots, M_{v(m)}$ to the $v(m)$ vertices of the graph  (the order does not matter).  Then, for every pair $(i,j), 1 \leq i < j \leq v(m)$ such that an edge connects  $M_i$ and $M_j$, define a r.v.\ $D_{i,j}$ as
\vspace{-2mm}
\begin{equation}\label{eq:define.Dij}
    D_{i,j} =
    \begin{cases}
    1, & \textup{if } M_i = M_j, \\
    0, & \textup{otherwise}.
    \end{cases}
\end{equation}
Let $n$ be the total number of edges. For convenience, we relabel the $n$ random variables in the sequence $\{D_{i,j}\}$ simply as
\begin{align}\label{the_D_sequence}
    D_1, \dots, D_n.
\end{align}
We define $\Xi_n$ to be the number of $1$'s in the sequence $\{D_k\}_{1 \leq k \leq n}$, and $\xi_n$ its standardized version, i.e.,
\begin{align}\label{sec:define.Xi_n}
    \Xi_n = \sum_{k=1}^n D_k, \qquad \xi_n =  \frac{\Xi_n - n \ell^{-1}}{\sqrt{n \ell^{-1} (1-\ell^{-1})}}.
\end{align}

The sequence $D_1, \dots, D_n$ is triplewise independent (see Remark~\ref{rem:threeconds}) and from it we now construct a new triplewise independent sequence $X_1,\dots,X_n$ such that $X_k \sim F$, for all $k = 1,\dots,n$. Define $U$ and $V$, with cumulative distribution functions $F_U$ and $F_V$ respectively, to be the truncated versions of $W$, respectively off and on the set $A$:
\begin{equation}
    U \stackrel{\mathrm{law}}{=} W | \{W\in A^c\}, \qquad V \stackrel{\mathrm{law}}{=} W | \{W\in A\},
\end{equation}
and denote
\begin{equation}\label{eq:muU_muV}
    \mu_U \leqdef \E[U], \qquad \sigma_U^2 \leqdef \Var[U], \qquad \mu_V \leqdef \E[V], \qquad \sigma_V^2 \leqdef \Var[V].
\end{equation}
Then, consider $n$ independent copies of $U$, and independently, $n$ independent copies of $V$:
\begin{equation}\label{eq:U_V_sequence}
    U_1, U_2, \dots, U_n \stackrel{\textup{i.i.d.}}{\sim} F_U, \qquad V_1, V_2, \dots, V_n \stackrel{\textup{i.i.d.}}{\sim} F_V,
\end{equation}
both defined on the probability space $(\Omega, \mathcal{F}, \mathbb{P})$.
Finally, for $\omega \in \Omega$ and for all $k = 1,\dots, n$, construct
\begin{align}\label{the_X_sequence}
    X_k(\omega) =
    \begin{cases}
    U_k (\omega), &  \textup{if } D_k(\omega) = 0, \\
    V_k (\omega), &  \textup{if } D_k(\omega) = 1.
    \end{cases}
\end{align}
By conditioning on $D_k$, it is easy to verify that
\begin{equation}\label{eq:F.X.k}
    F_{X_k}(x) = (1 - \ell^{-1}) F_{U_k}(x) + \ell^{-1} F_{V_k}(x) = F(x).
\end{equation}

Lastly, it is not hard to see that $X_1, \dots, X_n$ is triplewise independent. Indeed, for any given $k,k',k''\in \{1,2,\dots,n\}$ with $k,k',k''$ all different, the r.v.s $D_k$, $U_k$, $V_k$, $D_{k'}$, $U_{k'}$, $V_{k'}$, $D_{k''}$, $U_{k''}$, $V_{k''}$ are mutually independent and one can write $X_k = g(D_k,U_k,V_k)$, $X_{k'} = g(D_{k'},U_{k'},V_{k'})$ and $X_{k''} = g(D_{k''},U_{k''},V_{k''})$ for $g$ a Borel-measurable function. Since $X_k$, $X_{k'}$ and $X_{k''}$ are integrable, the result follows from the triplewise independence analogue of Corollary~2 in \citet[Section 4.1]{pollard2002}.

\begin{rem}\label{rem:threeconds}
In Condition~\ref{cond:F}, the restriction $\PP(W\in A) = \ell^{-1}$ for some integer $\ell$ may seem  arbitrary. Likewise, in \eqref{eq:uniform} the choice $p_i = \ell^{-1}$ for $i=1, \dots, \ell$ may also seem arbitrary. We establish here that none of these choices are arbitrary. Indeed, assume first that the only restriction on $p_1, p_2, \dots, p_{\ell}\in (0,1)$ is that
\begin{equation}\label{eq:p1.p2.condition}
    \begin{aligned}
    &(1) ~:~ p_1 + p_2 + \dots + p_{\ell} = 1, \\
    &(2) ~:~ p_1^2 + p_2^2 + \dots + p_{\ell}^2 = w, \\
    &(3) ~:~ p_1^3 + p_2^3 + \dots + p_{\ell}^3 = w^2, \\
    &(4) ~:~ p_1^4 + p_2^4 + \dots + p_{\ell}^4 = w^3.
    \end{aligned}
\end{equation}
for some $w \in (0,1)$. Condition~$(1)$ is necessary for the distribution in \eqref{eq:uniform} to be well-defined, and conditions $(2)$, $(3)$ and $(4)$ are rewritings of
\begin{align}
    \PP(D_{v_1,v_2} = 1) &= w, \label{eq:D.condition.1} \\
    \PP(D_{v_1,v_2} = 1, D_{v_2,v_3} = 1) &= \PP(D_{v_1,v_2} = 1) \PP(D_{v_2,v_3} = 1), \label{eq:D.condition.2} \\
    \PP(D_{v_1,v_2} = 1, D_{v_2,v_3} = 1, D_{v_3,v_4} = 1)
    &= \PP(D_{v_1,v_2} = 1) \PP(D_{v_2,v_3} = 1) \PP(D_{v_3,v_4} = 1), \label{eq:D.condition.3} \\
    &\hspace{10mm} \forall \, (v_1,v_2),(v_2,v_3),(v_3,v_4)\in \mathrm{Edges}(G_m). \notag
\end{align}
(Indeed, the edges on the path $v_1 v_2 \dots v_k$ all have the value $1$ if and only if all the corresponding values on the vertices, $M_{v_1}, M_{v_2}, \dots, M_{v_k},$ are equal. With $\ell$ possible choices for each vertex, this event has probability $\PP(D_{v_{j-1},v_j} = 1 ~\forall j\in \{2,3,\dots,k\}) = \sum_{i=1}^{\ell} \prod_{j=1}^k \PP(M_j = i) = \sum_{i=1}^{\ell} p_i^k ~~\forall k\in \N$.)
Note that the conditions \eqref{eq:D.condition.1}, \eqref{eq:D.condition.2} and \eqref{eq:D.condition.3} are sufficient to guarantee that the $D$'s are identically distributed and triplewise independent. Now, the solution $p_i = \ell^{-1}$ to \eqref{eq:p1.p2.condition} is \textit{unique}. Indeed, by squaring  condition~(2) in~\eqref{eq:p1.p2.condition} then applying  the Cauchy-Schwarz inequality, one gets
\begin{equation}\label{eq:system.solve}
    w^2 = \Big(\sum_{i=1}^{\ell} p_i^{3/2} p_i^{1/2}\Big)^2 \leq \sum_{i=1}^{\ell} p_i^3 \sum_{i=1}^{\ell} p_i = \sum_{i=1}^{\ell} p_i^3
\end{equation}
where the last equality comes from condition~(1) in~\eqref{eq:p1.p2.condition}.
Then, condition~$(3)$ requires that we have the equality in \eqref{eq:system.solve}, and this happens if and only if $p_i^{\scriptscriptstyle 3/2} = \lambda p_i^{\scriptscriptstyle 1/2}$ for all $i\in\{1,\dots,\ell\}$ and for  some $\lambda\in \R$. In turn, this implies $p_i = \lambda = \ell^{-1}$ because of $(1)$ and since $p_i > 0$, which then implies $w  = \ell^{-1}$ by $(2)$.
This unique solution also satisfies $(4)$, so this reasoning shows that we cannot extend our method to an arbitrary $\PP(W\in A)\in (0,1)$ in \eqref{eq:w.A}. 
\vspace{-3mm}
\end{rem}

\section{Main result}\label{sec:asymptotic-distribution-of-the-mean}

 We now state our main result, which links the asymptotic distribution of the standardized mean of the sequence $\{X_j\}_{1 \leq j \leq n}$ to that of $\xi_n$ in \eqref{sec:define.Xi_n}. This result holds for any growing sequence of simple graphs $\{G_m\}_{m\geq 1}$ of girth at least $4$ (as defined previously).
 Specific examples are given in the next section.

\begin{thm}\label{limiting_Sn_thm}
Let $X_1, \dots, X_n$ be random variables defined as in \eqref{the_X_sequence}. Provided that there exists a r.v.\ $Y$ such that
    \begin{equation}\label{eq:convergence.standardized.chi.squared}
        \xi_n \stackrel{\mathrm{law}}{\longrightarrow} Y, \quad \text{as $m\to \infty$ (and thus as $n\to \infty$),}
    \end{equation}
then the standardized sample mean $S_n \leqdef \big(\sum_{k=1}^n X_k - n \mu\big) / \sigma \sqrt{n}$ converges in law to the random variable
            \begin{equation}\label{eq:limit.S}
                S^{(\ell)} \leqdef \sqrt{1 - r^2} Z + r \, Y,
            \end{equation}
            where $Z\sim N(0,1)$ and $r \leqdef \frac{\sqrt{\ell^{-1} (1 - \ell^{-1})} (\mu_V - \mu_U)}{\sigma}$.
\end{thm}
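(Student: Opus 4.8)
The plan is to condition on the indicator sequence $D_1,\dots,D_n$ (equivalently on $\xi_n$, which is a deterministic function of the $D_k$'s) and to separate the part of $S_n$ carried by the conditional mean from the part carried by the conditional fluctuations. Writing $X_k = (1-D_k)U_k + D_k V_k$, the conditional mean of $\sum_k X_k$ given the $D$-sequence is $(n-\Xi_n)\mu_U + \Xi_n\mu_V$. Using the total-expectation identity $\mu = (1-\ell^{-1})\mu_U + \ell^{-1}\mu_V$ implicit in \eqref{eq:F.X.k}, the constant terms cancel and, since $\Xi_n - n\ell^{-1} = \xi_n\sqrt{n\ell^{-1}(1-\ell^{-1})}$ by \eqref{sec:define.Xi_n}, one gets
$$\frac{\E[\sum_{k=1}^n X_k \mid D] - n\mu}{\sigma\sqrt{n}} = \frac{(\mu_V-\mu_U)(\Xi_n - n\ell^{-1})}{\sigma\sqrt n} = r\,\xi_n,$$
with $r$ exactly as in the statement. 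Setting $T_n \leqdef \big(\sum_k X_k - \E[\sum_k X_k\mid D]\big)/(\sigma\sqrt n)$ then yields the exact decomposition $S_n = r\,\xi_n + T_n$, whose first summand already converges in law to $rY$ by the hypothesis \eqref{eq:convergence.standardized.chi.squared}.

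Next I would analyse $T_n$ conditionally on $D$. Because $X_k = g(D_k,U_k,V_k)$ with the pairs $(U_k,V_k)$ i.i.d.\ and independent of the whole $D$-sequence, the centred summands of $T_n$ are, conditionally on $D$, independent with conditional variances $(1-D_k)\sigma_U^2 + D_k\sigma_V^2$. The variance decomposition $\sigma^2 = (1-\ell^{-1})\sigma_U^2 + \ell^{-1}\sigma_V^2 + \ell^{-1}(1-\ell^{-1})(\mu_V-\mu_U)^2$, together with $\Xi_n/n \to \ell^{-1}$ in probability (which follows from $\E\Xi_n = n\ell^{-1}$, $\Var\Xi_n = n\ell^{-1}(1-\ell^{-1})$ and Chebyshev, the $D_k$'s being pairwise independent), shows that the normalised conditional variance $\tfrac{1}{n\sigma^2}[(n-\Xi_n)\sigma_U^2 + \Xi_n\sigma_V^2]$ converges in probability to $1-r^2$. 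Since $U$ and $V$ have finite variance, the Lindeberg condition for the conditional triangular array holds \emph{uniformly} in $D$: each truncated second moment tends to $0$ as the threshold $\varepsilon\sigma\sqrt n \to \infty$, and there are at most $n$ summands of each of the two types. Invoking the Lindeberg--Feller CLT conditionally then gives $\E[e^{\ii t T_n}\mid D] \to e^{-t^2(1-r^2)/2}$ in probability, for each fixed $t$.

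Finally I would assemble the two pieces via characteristic functions. As $\xi_n$ is $D$-measurable, conditioning on $D$ gives $\E[e^{\ii t S_n}] = \E\big[e^{\ii t r\xi_n}\,\E[e^{\ii t T_n}\mid D]\big]$. Writing $\E[e^{\ii t T_n}\mid D] = e^{-t^2(1-r^2)/2} + R_n$ with $R_n \to 0$ in probability and $|R_n|\le 2$, the cross term is bounded by $\E|R_n| \to 0$ (bounded convergence), while $e^{-t^2(1-r^2)/2}\,\E[e^{\ii t r\xi_n}] \to e^{-t^2(1-r^2)/2}\,\phi_Y(tr)$ by \eqref{eq:convergence.standardized.chi.squared}. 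The limit is exactly the characteristic function of $\sqrt{1-r^2}\,Z + rY$ with $Z\sim N(0,1)$ independent of $Y$, so the Lévy continuity theorem delivers $S_n \stackrel{\mathrm{law}}{\longrightarrow} S^{(\ell)}$ as in \eqref{eq:limit.S}.

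I expect the main obstacle to be the conditional CLT step, and in particular the fact that $\xi_n$ and $T_n$ are genuinely dependent at every finite $n$ (they are built from the same randomness), so the asymptotic independence of the Gaussian limit $Z$ and the limit $Y$ must be \emph{produced}, not assumed. The conditioning device is what resolves this: given $D$ the quantity $\xi_n$ is frozen, while the conditional law of $T_n$ converges to the same centred Gaussian irrespective of $D$ to leading order, which is precisely what decouples $Z$ from $Y$ in the limit. The remaining technical care is to upgrade the conditional CLT to convergence \emph{in probability} of the conditional characteristic function despite the randomness of $\Xi_n$ in the normalising variance; I would handle this with a Lindeberg-replacement bound that is deterministic in $D$, combined with the continuous mapping $V_n(D)\mapsto e^{-t^2 V_n(D)/2}$ applied to the random normalised variance $V_n(D)$.
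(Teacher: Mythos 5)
Your proposal is correct, and its skeleton is the same as the paper's: condition on the selection mechanism, split $S_n$ into the $D$-measurable drift $r\,\xi_n$ plus the conditionally centred fluctuation $T_n$ (your algebra for the conditional mean, via $\frac{\mu-\mu_U}{\mu_V-\mu_U}=\ell^{-1}$, and for the limiting conditional variance, via \eqref{eq:relation.r:var} and $\Xi_n/n\stackrel{\PP}{\to}\ell^{-1}$, is exactly the paper's), then pass to the limit in the characteristic function and invoke L\'evy continuity. Where you genuinely differ is in how the conditional Gaussian limit is produced and how the limits are exchanged. The paper works with the explicit product form of the conditional characteristic function, $e^{\ii t r \xi_n}[\varphi_{\widetilde{U}}(t_n)]^{\,n-\Xi_n}[\varphi_{\widetilde{V}}(t_n)]^{\,\Xi_n}$, isolates the deterministic part $[\varphi_{\widetilde{U}}(t_n)]^{n(1-\ell^{-1})}[\varphi_{\widetilde{V}}(t_n)]^{n\ell^{-1}}$ (handled by the classical CLT), and controls two correction factors with the random exponent $(\Xi_n-n\ell^{-1})/n$ via elementary bounds on $|e^z-1|$ and the law of large numbers; this requires the delicate justification of complex-power manipulations (principal arguments) noted parenthetically after \eqref{eq:thm:main.result.beginning}, and the final passage to expectations uses convergence in law of the conditional characteristic function together with uniform integrability (Billingsley, Theorem 25.12). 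You instead invoke a conditional Lindeberg--Feller CLT, observing that the Lindeberg quantity is bounded by a deterministic sequence uniformly over the conditioning (since there are at most $n$ summands drawn from the two fixed laws $F_U$, $F_V$), and then apply continuous mapping to the concentrating conditional variance; this sidesteps the complex-power subtleties entirely. Your final step is also lighter: because you factor out $e^{\ii t r\xi_n}$, whose expectation converges to $\varphi_Y(tr)$ directly by the hypothesis \eqref{eq:convergence.standardized.chi.squared}, the remainder $R_n$ converges in probability to a \emph{constant}, so bounded convergence suffices and no convergence-in-law-plus-uniform-integrability argument for random characteristic functions is needed. The trade-off: the paper's route is self-contained at the level of characteristic-function calculus, while yours outsources the core estimate to a standard quantitative form of the Lindeberg theorem; both correctly produce the asymptotic independence of $Z$ and $Y$ through the conditioning device, which, as you rightly flag, is the crux of the argument.
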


\begin{rem}\label{rem:main.thm.1}
If $r \neq 0$ and $\xi_n$ is asymptotically non-Gaussian (this happens for certain graphs $\{G_m\}_{m\geq 1}$, see the next section for examples), then $S_n$ is asymptotically non-Gaussian. Note that the restriction $r\neq 0$ is not stringent, as it includes {\it all} distributions $F$ (in Condition~\ref{cond:F}) with a non-atomic part. Indeed, if $W \sim F$ has a non-atomic part, then $W$ has a non-atomic part on either  $(\E[W],\infty)$ or $(-\infty,\E[W])$. Without loss of generality, assume that the non-atomic part is on $(\E[W],\infty)$, then we can find an integer $\ell\geq 2$ and a Borel set $A_0$ such that $\PP(W\in A) = \ell^{-1}$ with $A = (\E[W],\infty) \cap A_0$. By construction, this yields
\begin{equation}
    \E[W | A] > \E[W] = \E[W \ind_{A}] + \E[W \ind_{A^c}] = \E[W | A] \, \ell^{-1} + \E[W | A^c] \, (1 - \ell^{-1}),
\end{equation}
so that $\E[W | A] > \E[W | A^c]$.
The restriction $r\neq0$ also includes {\it almost all} discrete distributions with at least one weight of the form $\ell^{-1}$; see Remark~2 in \citet{MR4208980} for a formal argument. Also, note that, depending on $F$, many choices for $A$ (with possibly different values of $\ell$) could be available.
\end{rem}

\begin{rem}\label{rem:main.thm.2}
    If the margin $F$ satisfies Condition~\ref{cond:F}, and if $r = 0$ (i.e., $\mu_U = \mu_V$) or $\xi_n$ is asymptotically Gaussian, then our construction provides new triplewise independent (but not mutually independent) sequences which do satisfy a CLT (regardless of which graphs $\{G_m\}_{m\geq 1}$ are used).
\end{rem}

\begin{proof}[Proof of Theorem~\ref{limiting_Sn_thm}]
    We prove \eqref{eq:limit.S} by obtaining the limit of the characteristic function of $S_n$, and then by invoking L{\'e}vy's continuity theorem.
    Namely, we show that, for all $t\in \R$,
    \begin{align}\label{cf_S}
        \varphi_{S_n}(t) \stackrel{m\to \infty}{\longrightarrow} \varphi_{\sqrt{1 - r^2} Z}(t) \cdot \varphi_{r Y}(t).
    \end{align}
Recall the notation defined in \eqref{eq:muU_muV} and let
    \begin{equation}\label{definition_r_U_V_tilde}
        \widetilde{U}_k \leqdef \frac{\sigma_U}{\sigma} \cdot \frac{U_k - \mu_U}{\sigma_U} ~\quad \text{and} ~\quad \widetilde{V}_k \leqdef \frac{\sigma_V}{\sigma} \cdot \frac{V_k - \mu_V}{\sigma_V},
    \end{equation}
    then we can write
    \begin{equation}
        \begin{aligned}
            S_n
            &= \frac{\sum_{k=1}^n X_k - n \mu }{\sigma \sqrt{n}} = \frac{\sum_{\substack{k=1 \\ D_k = 0}}^n U_k + \sum_{\substack{k=1 \\ D_k = 1}}^n V_k - n \mu}{\sigma \sqrt{n}} \\
            &= \frac{1}{\sqrt{n}} \Bigg(\frac{(n - \Xi_n) \mu_U + \Xi_n \mu_V - n \mu}{\sigma} + \sum_{\substack{k=1 \\ D_k = 0}}^n \frac{U_k - \mu_U}{\sigma} + \sum_{\substack{k=1 \\ D_k = 1}}^n \frac{V_k - \mu_V}{\sigma}\Bigg) \\
            &= \frac{1}{\sqrt{n}} \Bigg(\frac{(\mu_V - \mu_U)}{\sigma} \bigg[\Xi_n - n \frac{(\mu - \mu_U)}{\mu_V - \mu_U}\bigg] + \sum_{\substack{k=1 \\ D_k = 0}}^n \widetilde{U}_k + \sum_{\substack{k=1 \\ D_k = 1}}^n \widetilde{V}_k\Bigg) \\
            &= \frac{1}{\sqrt{n}} \Bigg(r \, \frac{\big(\Xi_n - n \ell^{-1}\big)}{\sqrt{\ell^{-1} (1 - \ell^{-1})}} + \sum_{\substack{k=1 \\ D_k = 0}}^n \widetilde{U}_k + \sum_{\substack{k=1 \\ D_k = 1}}^n \widetilde{V}_k\Bigg),
        \end{aligned}
    \end{equation}
    since $\Xi_n = \#\{k: D_k = 1\}$, and we know that, from \eqref{eq:F.X.k},
    \begin{equation}\label{eq:mu.decomposition}
        \frac{\mu - \mu_U}{\mu_V - \mu_U} = \frac{[(1 - \ell^{-1}) \mu_U + \ell^{-1} \mu_V] - \mu_U}{\mu_V - \mu_U} = \ell^{-1}.
    \end{equation}
    With the notation $t_n \leqdef t / \sqrt{n}$, the mutual independence between the $U_k$'s, the $V_k$'s and $\bb{M} \leqdef \{M_j\}_{j=1}^{v(m)}$ yields, for all $t\in \R$,
    \vspace{-1mm}
    \begin{align}\label{eq:thm:main.result.beginning}
        \E\big[e^{\ii t S_n} | \bb{M}\big]
        &= e^{\ii t r \, \frac{(\Xi_n - n \ell^{-1})}{\sqrt{n \ell^{-1} (1 - \ell^{-1})}}} \prod_{\substack{k=1 \\ D_k = 0}}^n \E[e^{\ii t_n \widetilde{U}_k} | \bb{M}] \prod_{\substack{k=1 \\ D_k = 1}}^n \E[e^{\ii t_n \widetilde{V}_k} | \bb{M}] \notag \\[-2mm]
        &= e^{\ii t r \, \xi_n} ~ [\varphi_{\widetilde{U}}(t_n)]^{n (1 - \ell^{-1})} [\varphi_{\widetilde{V}}(t_n)]^{n \ell^{-1}} \bigg[\frac{\varphi_{\widetilde{V}}(t_n)}{\varphi_{\widetilde{U}}(t_n)}\bigg]^{\Xi_n - n \ell^{-1}} \notag \\[-1mm]
        &= e^{\ii t r \, \xi_n} \cdot [\varphi_{\widetilde{U}}(t_n)]^{n (1 - \ell^{-1})} [\varphi_{\widetilde{V}}(t_n)]^{n \ell^{-1}} \notag \\
        &\quad\cdot \left[\frac{[\varphi_{\widetilde{V}}(t_n)]^n \cdot e^{\frac{1}{2} \cdot \frac{\sigma_V^2}{\sigma^2} t^2}}{[\varphi_{\widetilde{U}}(t_n)]^n \cdot e^{\frac{1}{2} \cdot \frac{\sigma_U^2}{\sigma^2} t^2}}\right]^{\hspace{-0.5mm}\frac{\Xi_n - n \ell^{-1}}{n}} \hspace{-6mm} \cdot \hspace{4mm} \left[\frac{e^{-\frac{1}{2} \cdot \frac{\sigma_V^2}{\sigma^2} t^2}}{e^{-\frac{1}{2} \cdot \frac{\sigma_U^2}{\sigma^2} t^2}}\right]^{\hspace{-0.5mm}\frac{\Xi_n - n \ell^{-1}}{n}}\hspace{-6mm}.
    \end{align}
    (The reader should note that, for $n$ large enough, the manipulations of exponents in the second and third equality above are valid because the highest powers of the complex numbers involved have their principal argument converging to $0$. This stems from the fact that $\Xi_n \leq n$, and the quantities $[\varphi_{\widetilde{V}}(t_n)]^n$ and $[\varphi_{\widetilde{V}}(t_n)]^n$ both converge to real exponentials as $n\to \infty$, by the CLT.)
    We now evaluate the four factors on the right-hand side of \eqref{eq:thm:main.result.beginning}.
    For the first factor in \eqref{eq:thm:main.result.beginning}, the continuous mapping theorem and \eqref{eq:convergence.standardized.chi.squared} yield
    \begin{equation}\label{eq:thm:main.result.beginning.part.1}
        e^{\ii t r \, \xi_n} \stackrel{\mathrm{law}}{\longrightarrow} e^{\ii t r Y}, \quad \text{as } m\to \infty.
    \end{equation}
    For the second factor in \eqref{eq:thm:main.result.beginning}, the classical CLT yields
    \begin{equation}\label{eq:thm:main.result.beginning.part.2}
        \begin{aligned}
            [\varphi_{\widetilde{U}}(t_n)]^{n (1 - \ell^{-1})} [\varphi_{\widetilde{V}}(t_n)]^{n \ell^{-1}}
            &\stackrel{m\to \infty}{\longrightarrow} \exp\Big(-\frac{1}{2} \cdot (1 - \ell^{-1}) \frac{\sigma_U^2}{\sigma^2} t^2\Big) \exp\Big(-\frac{1}{2} \cdot \ell^{-1} \frac{\sigma_V^2}{\sigma^2} t^2\Big) \\
            &= e^{-\frac{1}{2} (1 - r^2) t^2},
        \end{aligned}
    \end{equation}
    where in the last equality we used the fact that, from \eqref{eq:F.X.k},
    \begin{equation}\label{eq:relation.r:var}
        \sigma^2 = \E[X^2] - \mu^2 = (1 - \ell^{-1}) \sigma_U^2 + \ell^{-1} \sigma_V^2 + \ell^{-1} (1 - \ell^{-1}) (\mu_U - \mu_V)^2.
    \end{equation}
    For the third factor in \eqref{eq:thm:main.result.beginning}, the quantity inside the bracket converges to $1$ by the CLT.
    Hence, the elementary bound
    \begin{equation}
        |e^z - 1| \leq |z| + \sum_{j=2}^{\infty} \frac{|z|^j}{2} \leq |z| + \frac{|z|^2}{2 (1 - |z|)} \leq \frac{1 + \ell^{-1}}{2 \ell^{-1}} |z|, ~\quad \text{for all } |z| \leq 1 - \ell^{-1},
    \end{equation}
    and the fact that $\big|\frac{\Xi_n - n \ell^{-1}}{n}\big| \leq 1 - \ell^{-1}$ yield, as $m\to \infty$,
    \begin{equation}\label{eq:thm:main.result.beginning.part.3}
        \left|\left[\frac{[\varphi_{\widetilde{V}}(t_n)]^n \cdot e^{\frac{1}{2} \cdot \frac{\sigma_V^2}{\sigma^2} t^2}}{[\varphi_{\widetilde{U}}(t_n)]^n \cdot e^{\frac{1}{2} \cdot \frac{\sigma_U^2}{\sigma^2} t^2}}\right]^{\frac{\Xi_n - n \ell^{-1}}{n}} - 1\right| \stackrel{\text{a.s.}}{\leq} \frac{1 - \ell^{-2}}{2 \ell^{-1}} \left|\text{Log}\left[\frac{[\varphi_{\widetilde{V}}(t_n)]^n \cdot e^{\frac{1}{2} \cdot \frac{\sigma_V^2}{\sigma^2} t^2}}{[\varphi_{\widetilde{U}}(t_n)]^n \cdot e^{\frac{1}{2} \cdot \frac{\sigma_U^2}{\sigma^2} t^2}}\right]\right| \longrightarrow 0.
    \end{equation}
    For the fourth factor in \eqref{eq:thm:main.result.beginning}, we note that $\frac{\Xi_n - n \ell^{-1}}{n} \stackrel{\PP}{\longrightarrow} 0$ (because of the law of large numbers for pairwise independent r.v.s). Then, by the continuous mapping theorem,
    \vspace{-2mm}
    \begin{equation}\label{eq:thm:main.result.beginning.part.4}
        \left[\frac{e^{-\frac{1}{2} \cdot \frac{\sigma_V^2}{\sigma^2} t^2}}{e^{-\frac{1}{2} \cdot \frac{\sigma_U^2}{\sigma^2} t^2}}\right]^{\frac{\Xi_n - n \ell^{-1}}{n}} \stackrel{\PP}{\longrightarrow} 1, \quad \text{as } m\to \infty.
    \end{equation}
    By combining \eqref{eq:thm:main.result.beginning.part.1}, \eqref{eq:thm:main.result.beginning.part.2}, \eqref{eq:thm:main.result.beginning.part.3} and \eqref{eq:thm:main.result.beginning.part.4}, Slutsky's lemma implies, for all $t\in \R$,
    \begin{equation}\label{eq:thm:main.result.beginning.combined}
        \E\big[e^{\ii t S_n} | \bb{M}\big] \stackrel{\mathrm{law}}{\longrightarrow} e^{\ii t r Y} e^{-\frac{1}{2} (1 - r^2) t^2}, \quad \text{as } m\to \infty.
    \end{equation}
    Since the sequence $\{|\E[e^{\ii t S_n} | \bb{M}]|\}_{m\in \N}$ is uniformly integrable (it is bounded by $1$), Theorem 25.12 in \citep{billingsley1995} shows that we also have the mean convergence
    \begin{equation}\label{eq:thm:main.result.mean.convergence}
        \E\big[\E\big[e^{\ii t S_n} | \bb{M}\big]\big] \longrightarrow \E\big[e^{\ii t r Y}\big] e^{-\frac{1}{2} (1 - r^2) t^2}, \quad \text{as } m\to \infty,
    \end{equation}
    which proves \eqref{cf_S}. The conclusion follows.
\end{proof}

\section{Examples 
}\label{sec:properties-of-s}

In Theorem~\ref{limiting_Sn_thm}, whether the standardized sample mean $S_n$ is asymptotically Gaussian depends on the ‘connectivity’ of the chosen graphs $\{G_m\}_{m\geq 1}$. In particular, it appears that having graphs of bounded diameter is a necessary (albeit not sufficient) condition for $S_n$ to be asymptotically non-Gaussian. To make this point explicit, we present two specific examples for which we obtain the (non-Gaussian) asymptotic distribution of $\xi_n$ (via Theorem~\ref{limiting_Sn_thm}, this also provides the asymptotic distribution of $S_n$). We present a third example where the limiting distribution \textit{is} Gaussian.

\subsection{First example}\label{sec:example.1}

\begin{thm}
Let $\{G_m\}_{m\geq 1}$ be the sequence of bipartite graphs $\{K_{m,m}\}_{m\geq 1}$ described above Figure~\ref{fig:K_4_4}, and consider the construction from Section~\ref{sec:construction} where i.i.d.\ discrete uniforms $M_1, \ldots, M_{2m}$ are assigned to the vertices of $G_m$. That is, $M_1,\ldots, M_m$ are assigned to the $m$ vertices of set $1$, and $M_{m+1}, \dots, M_{2m}$ to the $m$ vertices of set $2$. Then,
\vspace{-2mm}
    \begin{align}
\xi_n  &\stackrel{\mathrm{law}}{\longrightarrow}  \frac{\xi}{\sqrt{\ell-1}}, \quad \text{as $m\to \infty$ (and thus as $n\to \infty$),}
    \end{align}
where $ \xi\sim \textup{VG}(\ell-1,0,1,0),$ and \textup{VG} denotes the variance-gamma distribution (see Definition~\ref{def:variance.gamma.distribution}).
\end{thm}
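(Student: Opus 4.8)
The plan is to reduce the whole statement to the joint fluctuations of the two vectors of label-counts, and then to recognize the limit as a bilinear form in two independent Gaussian vectors. For $i\in\{1,\dots,\ell\}$, write $N_i$ (resp.\ $N_i'$) for the number of vertices of set $1$ (resp.\ set $2$) carrying the label $i$. Since $K_{m,m}$ joins every vertex of set $1$ to every vertex of set $2$, an edge contributes $1$ to $\Xi_n$ if and only if its two endpoints share a common label, so
\begin{equation}
\Xi_n = \sum_{i=1}^{\ell} N_i N_i', \qquad n = m^2 .
\end{equation}
Moreover $(N_1,\dots,N_\ell)$ and $(N_1',\dots,N_\ell')$ are \emph{independent} $\mathrm{Multinomial}(m,(\ell^{-1},\dots,\ell^{-1}))$ vectors.

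The key to the argument is an exact centering identity. Set $G_i \leqdef (N_i - m\ell^{-1})/\sqrt{m}$ and $G_i' \leqdef (N_i' - m\ell^{-1})/\sqrt{m}$, which satisfy the deterministic constraints $\sum_i G_i = \sum_i G_i' = 0$. Expanding $N_i N_i' = (m\ell^{-1} + \sqrt{m}\,G_i)(m\ell^{-1} + \sqrt{m}\,G_i')$ and summing over $i$, the cross term $m^{3/2}\ell^{-1}(\sum_i G_i + \sum_i G_i')$ vanishes because of these constraints, leaving $\Xi_n - n\ell^{-1} = m\sum_{i=1}^{\ell} G_i G_i'$. Since $\sqrt{n}=m$, the definition \eqref{sec:define.Xi_n} then gives the exact expression $\xi_n = (\ell^{-1}(1-\ell^{-1}))^{-1/2}\sum_{i=1}^{\ell} G_i G_i'$, \emph{with no remainder term to control}.

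To pass to the limit, I would invoke the multivariate CLT together with the independence of the two sets: $(G,G')\stackrel{\mathrm{law}}{\to}(\widetilde G,\widetilde G')$ jointly, where $\widetilde G,\widetilde G'$ are independent centered Gaussian vectors with the multinomial covariance $\Sigma = \ell^{-1}(I - \ell^{-1}J)$, with $J$ the all-ones matrix; that is, $\Sigma$ equals $\ell^{-1}$ times the orthogonal projection $P$ onto the hyperplane $\{x:\sum_i x_i = 0\}$. The bilinear map $(x,y)\mapsto \sum_i x_i y_i$ is continuous, so the continuous mapping theorem yields $\sum_i G_i G_i' \stackrel{\mathrm{law}}{\to} \widetilde G^{\top}\widetilde G'$. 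Diagonalizing, I would fix an orthonormal basis $e_1,\dots,e_{\ell-1}$ of that hyperplane; then the coordinates $e_k^{\top}\widetilde G$ and $e_k^{\top}\widetilde G'$ are independent $N(0,\ell^{-1})$, whence $\widetilde G^{\top}\widetilde G' = \ell^{-1}\sum_{k=1}^{\ell-1}\alpha_k\beta_k$ with $\alpha_k,\beta_k$ i.i.d.\ $N(0,1)$. Combining with $\sqrt{\ell^{-1}(1-\ell^{-1})} = \sqrt{\ell-1}/\ell$ gives $\xi_n \stackrel{\mathrm{law}}{\to} (\ell-1)^{-1/2}\sum_{k=1}^{\ell-1}\alpha_k\beta_k$.

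It then remains to identify the law. Conditioning on $\alpha_k$ shows that each product $\alpha_k\beta_k$ has characteristic function $(1+t^2)^{-1/2}$, so $\xi \leqdef \sum_{k=1}^{\ell-1}\alpha_k\beta_k$ has characteristic function $(1+t^2)^{-(\ell-1)/2}$, which is precisely that of $\mathrm{VG}(\ell-1,0,1,0)$ (Definition~\ref{def:variance.gamma.distribution}); this delivers $\xi_n \stackrel{\mathrm{law}}{\to} \xi/\sqrt{\ell-1}$. I expect the main obstacle to be the careful handling of the \emph{degenerate} limiting Gaussian supported on the hyperplane $\sum_i x_i = 0$: one must verify that the covariance is exactly $\ell^{-1}P$ and diagonalize so as to recover precisely $\ell-1$ independent products, since an off-by-one there would alter the $\mathrm{VG}$ shape parameter. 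Everything else is the exact algebraic identity of the second paragraph followed by a routine continuous-mapping argument.
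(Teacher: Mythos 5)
Your proposal is correct, and it follows the same overall strategy as the paper's proof --- write $\Xi_n = \sum_{i=1}^{\ell} N_i^{(1)} N_i^{(2)}$ for two independent multinomial count vectors, apply the multivariate CLT, and identify the limiting inner product of two independent Gaussian vectors as a variance-gamma law --- but your linear-algebra route is genuinely different. The paper drops the $\ell$-th coordinate and works in dimension $\ell-1$: the lengthy expansion \eqref{np} rewrites $\Xi_n - m^2\ell^{-1}$ as the quadratic form $m\ell^{-1}\,\bb{Y}_1^{\top}(m\Sigma)^{-1}\bb{Y}_2$, using the explicit inverse of the multinomial covariance matrix from \citet{tanabe1992}, and a Cholesky factorization $\Sigma = LL^{\top}$ then whitens the two vectors, so that the CLT delivers two independent $N(0,I_{\ell-1})$ limits and Lemma~\ref{lem:variance.gamma.result} applies immediately. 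You instead keep all $\ell$ coordinates, exploit the constraints $\sum_i G_i = \sum_i G_i' = 0$ to obtain the exact identity $\Xi_n - n\ell^{-1} = m\sum_{i=1}^{\ell} G_i G_i'$ with almost no computation, and postpone the linear algebra to the limit, where you diagonalize the degenerate covariance $\ell^{-1}(I - \ell^{-1}J)$ as $\ell^{-1}$ times the orthogonal projection onto $\{x : \sum_i x_i = 0\}$, recovering exactly $\ell-1$ independent products of i.i.d.\ standard normals. The trade-off is clear: the paper's whitening keeps every Gaussian nondegenerate, so no rank bookkeeping is needed, whereas your route eliminates both the Tanabe--Sagae inverse and the Cholesky step --- replacing the paper's heaviest computation by a two-line centering identity --- at the cost of the care you rightly flag at the end, namely verifying that the limiting covariance has rank exactly $\ell-1$, since that rank is precisely the VG shape parameter. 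Your closing identification via the characteristic function $(1+t^2)^{-(\ell-1)/2}$ agrees with \eqref{eq:characteristic.variance.gamma.result}, and your constant $\ell^{-1}/\sqrt{\ell^{-1}(1-\ell^{-1})} = 1/\sqrt{\ell-1}$ matches the paper's normalization, so both arguments land on the same limit $\xi/\sqrt{\ell-1}$ with $\xi \sim \textup{VG}(\ell-1,0,1,0)$.
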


\begin{rem}
    Because a standardized $\textup{VG}(\ell-1,0,1,0)$ distribution converges to a standard Gaussian as $\ell$ tends to infinity, we see that, in Theorem~\ref{limiting_Sn_thm}, $S^{(\ell)} \stackrel{\mathrm{law}}{\longrightarrow} N(0,1)$ as $\ell\to \infty$.
\end{rem}

\begin{proof}

First, note that $v(m)= 2m$ and $n=m^2$. Define, for $i\in \{1,2,\dots,\ell\}$,
    \begin{equation*}
        \begin{aligned}
            &N_i^{(1)} = N_i^{(1)}(m), ~\text{the number of $M_j$'s equal to $i$ within the sample $\{M_j\}_{j=1}^m$}, \\
            &N_i^{(2)} = N_i^{(2)}(m), ~\text{the number of $M_j$'s equal to $i$ within the sample $\{M_j\}_{j=m+1}^{2m}$}. \\
        \end{aligned}
    \end{equation*}
    Then, $\bb{N}^{(j)} \leqdef (N_1^{(j)},\dots,N_{\ell}^{(j)}) \sim \textup{Multinomial}\hspace{0.3mm}(m, (\ell^{-1}, \dots, \ell^{-1}))$ for $j\in \{1,2\}$, and $\bb{N}^{(1)}$ and $\bb{N}^{(2)}$ are independent.
    Importantly, if $\bb{N}^{(1)}$ and $\bb{N}^{(2)}$ are known, then the number of $1$'s in the sequence $\{D_k\}_{1 \leq k \leq n}$, denoted by $\Xi_n$ throughout, can be deduced from simple calculations as
    \begin{align}\label{np}
        \Xi_n
        &= \sum_{i=1}^{\ell} N_i^{(1)} N_i^{(2)}
        = \sum_{i=1}^{\ell-1} N_i^{(1)} N_i^{(2)} + \big(m - \sum_{i=1}^{\ell-1} N_i^{(1)}\big) \big(m - \sum_{i'=1}^{\ell-1} N_{i'}^{(2)}\big) \notag \\
        &= \sum_{i=1}^{\ell-1} N_i^{(1)} N_i^{(2)} + \sum_{i=1}^{\ell-1} \sum_{i'=1}^{\ell-1} N_i^{(1)} N_{i'}^{(2)} - m \sum_{i=1}^{\ell-1} N_i^{(1)} - m \sum_{i'=1}^{\ell-1} N_{i'}^{(2)} + m^2 \notag \\
        &= \sum_{i=1}^{\ell-1} (N_i^{(1)} - m \ell^{-1}) (N_i^{(2)} - m \ell^{-1}) + \sum_{i=1}^{\ell-1} \sum_{i'=1}^{\ell-1} (N_i^{(1)} - m \ell^{-1}) (N_{i'}^{(2)} - m \ell^{-1}) \notag \\[-2mm]
        &\quad + m \ell^{-1} \sum_{i=1}^{\ell-1} N_i^{(1)} + m \ell^{-1} \sum_{i=1}^{\ell-1} N_i^{(2)} + (\ell-1) m \ell^{-1} \sum_{i=1}^{\ell-1} N_i^{(1)} + (\ell-1) m \ell^{-1} \sum_{i=1}^{\ell-1} N_i^{(2)} \notag \\[-2mm]
        &\quad- m \sum_{i=1}^{\ell-1} N_i^{(1)} - m \sum_{i=1}^{\ell-1} N_i^{(2)} - (\ell-1) m^2 \ell^{-2} - (\ell-1)^2 m^2 \ell^{-2} + m^2 \notag \\
        &= \sum_{i=1}^{\ell-1} (N_i^{(1)} - m \ell^{-1}) (N_i^{(2)} - m \ell^{-1}) + \sum_{i=1}^{\ell-1} \sum_{i'=1}^{\ell-1} (N_i^{(1)} - m \ell^{-1}) (N_{i'}^{(2)} - m \ell^{-1}) + m^2 \ell^{-1} \notag \\
        &= m \ell^{-1} \sum_{i=1}^{\ell-1} \sum_{i'=1}^{\ell-1} \Big(\frac{1}{\ell^{-1}} \ind_{\{i=i'\}} + \frac{1}{\ell^{-1}}\Big) \frac{(N_i^{(1)} - m \ell^{-1})}{\sqrt{m}} \frac{(N_i^{(2)} - m \ell^{-1})}{\sqrt{m}} + m^2 \ell^{-1},
    \end{align}
    where $\ind_B$ denotes the indicator function on the set $B$.
    It is well known that the covariance matrix for the first $\ell-1$ components of a $\textup{Multinomial}\hspace{0.3mm}(m, (p_1,p_2,\dots,p_{\ell}))$ vector is $m \Sigma$ where $\Sigma_{i,i'} = p_i \ind_{\{i=i'\}} - p_i p_{i'}$, for $1 \leq i,i' \leq \ell-1$, and also that $(\Sigma^{-1})_{i,i'} = p_i^{-1} \ind_{\{i=i'\}} + p_{\ell}^{-1}, ~ 1 \leq i,i' \leq \ell-1,$ see \citet[eq.\hspace{0.5mm}21]{tanabe1992}.
    If $\Sigma = L L^{\top}$ is the Cholesky decomposition of $\Sigma$ when $p_i = \ell^{-1}$ for all $i$, and $\bb{Y}_{\hspace{-0.5mm}1} \leqdef (N_i^{(1)} - m \ell^{-1})_{i=1}^{\ell-1}$ and $\bb{Y}_{\hspace{-0.5mm}2} \leqdef (N_i^{(2)} - m \ell^{-1})_{i=1}^{\ell-1}$, then we have
    \begin{equation}
        \begin{aligned}
            \Xi_n - m^2 \ell^{-1}
            &= m \ell^{-1} \bb{Y}_{\hspace{-0.5mm}1}^{\top} (m \Sigma)^{-1} \bb{Y}_{\hspace{-0.5mm}2} \\
            &= m \ell^{-1} (m^{-1/2} L^{-1} \bb{Y}_{\hspace{-0.5mm}1})^{\top} (m^{-1/2} L^{-1} \bb{Y}_{\hspace{-0.5mm}2}).
        \end{aligned}
    \end{equation}
    By the classical multivariate CLT and Definition~\ref{def:variance.gamma.distribution} in Appendix~\ref{sec:variance.gamma}, we get the result.
\end{proof}

    Next, we illustrate what the asymptotic distribution of $S_n$ (the standardized sample mean) looks like in this example. By Theorem~\ref{limiting_Sn_thm},  $S_n$  converges in law to a r.v.\
   \begin{equation}
        S^{(\ell)} \stackrel{\mathrm{law}}{=} \sqrt{1 - r^2} Z + r \, \frac{\xi}{\sqrt{\ell - 1}},
    \end{equation}
    where the r.v.s $Z\sim N(0,1)$ and $\xi\sim \textup{VG}(\ell-1,0,1,0)$ (see Definition~\ref{def:variance.gamma.distribution} in Appendix~\ref{sec:variance.gamma}) are independent.

    For a fixed $\ell \geq 2$, the distribution of $S^{(\ell)}$ has only one parameter, $r$ (defined in Theorem~\ref{limiting_Sn_thm}), which depends on the margin $F$ (through the quantities $A$, $\mu_U$, $\mu_V$ and $\sigma$). Note that  $0 \leq r^2 \leq 1$, and that the critical points $r^2 = 0, 1$ are reachable for certain choices of $F$, see Section~4 and Appendix~A in \citet{MR4208980} for specific examples.

 Hence, when $\ell \geq 2$ is fixed, $r$ completely determines the shape of $S^{(\ell)}$; $r$ close to $0$ means that $S^{(\ell)}$ is close to a standard Gaussian, while $r$ close to $\pm 1$ means that $S^{(\ell)}$ is close to a standardized $\textup{VG}(\ell-1,0,1,0)$.
    Figure~\ref{cdf_df_r2_comparison} (where $\ell = 2$ and $r$ varies) illustrates this shift from a Gaussian distribution towards a $\textup{VG}(\ell-1,0,1,0)$ distribution. On the other hand, regardless of $r$, if $\ell$ increases then $S^{(\ell)}$ gets closer to a $N(0,1)$. This is illustrated in Figure~\ref{cdf_df_ell_comparison} (where $r=0.99$ and $\ell$ varies). It is clear from these figures that triplewise independence can be a very poor substitute to mutual independence as an assumption in the classical CLT.

    \begin{figure}[H]
        \centering
        \hspace{-5mm}
        \includegraphics[scale=0.485]{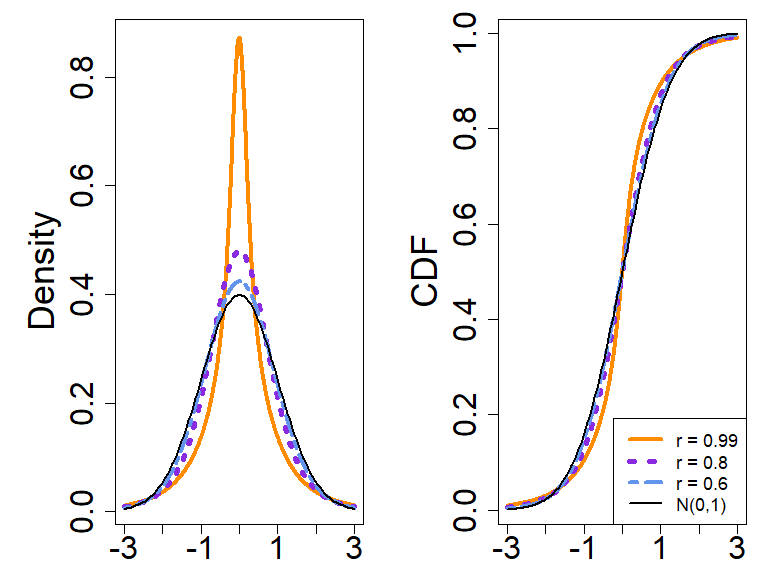}
        \caption{Density (left) and c.d.f.\  (right)  of $S^{(\ell)}$ for fixed $\ell = 2$ and varying $r$ ($r = 0.6, 0.8, 0.99)$, compared to those of a $N(0,1)$. This illustrates that the CLT can `fail' substantially under triplewise independence.}
        \label{cdf_df_r2_comparison}
        \centering
        \vspace{3mm}
        \hspace{-5mm}
        \includegraphics[scale=0.485]{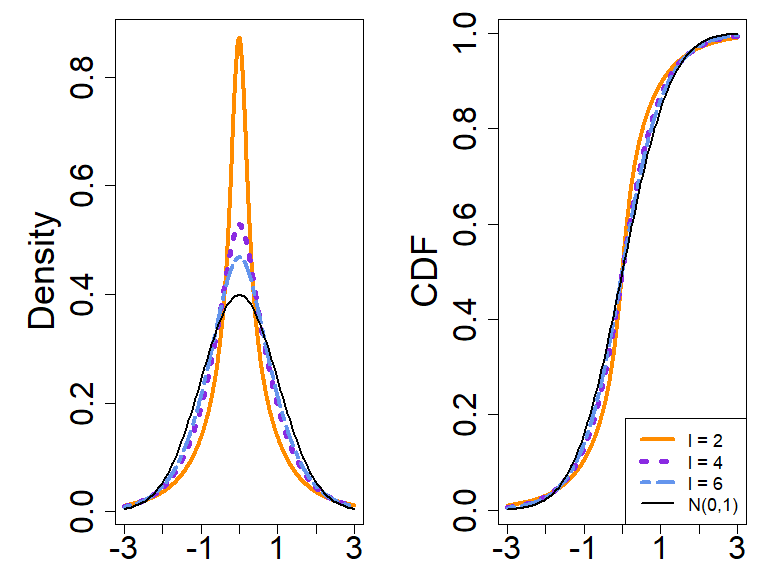}
        \caption{Density (left) and c.d.f.\  (right) of $S^{(\ell)}$ for fixed $r = 0.99$ and varying $\ell$ ($\ell = 2, 4, 6$), compared to those of a $N(0,1)$. This illustrates that $S^{(\ell)}$ converges to a $N(0,1)$ as $\ell$ grows.}
        \label{cdf_df_ell_comparison}
    \end{figure}

 Lastly, the first moments of $S^{(\ell)}$ (obtained with simple calculations in \texttt{Mathematica}) are
    \begin{equation}
        \E[S^{(\ell)}] = 0, \quad \E[(S^{(\ell)})^2] = 1, \quad \E[(S^{(\ell)})^3] = 0 \quad \text{and} \quad \E[(S^{(\ell)})^4] = \frac{6 r^4}{\ell - 1} + 3.
    \end{equation}
    Thus, an upper bound on the kurtosis of $S^{(\ell)}$ is $6 / (\ell - 1) + 3$, which implies that the limiting r.v.\ $S^{(\ell)}$ can be substantially more heavy-tailed than the standard Gaussian distribution (which is also seen in Figure~\ref{cdf_df_r2_comparison}).

\subsection{Second example}\label{sec:example.2}

Consider the sequence of graphs $\{G_m\}_{m \geq 1}$ as displayed in Figure~\ref{fig:second.graph} for $m=6$, where $M_0, M_1, M_2, \dots, M_{m+1}$ is a sequence of i.i.d.\ \textup{Bernoulli}\hspace{0.3mm}(1/2) \text{r.v.s} assigned to the vertices. For each $m$, the graph $G_m$ has $v(m)= m+2$ vertices and $n=2m$ edges. Every vertex in the set $\{M_1,M_2,\dots,M_m\}$ (in the middle) is linked by an edge to the adjacent vertices $M_0$ (on the left) and $M_{m+1}$ (on the right). This sequence of graphs yields Theorem~\ref{limiting_Sn_thm.2nd.example}.

\begin{figure}[ht]
    \centering
    \includegraphics[width=60mm]{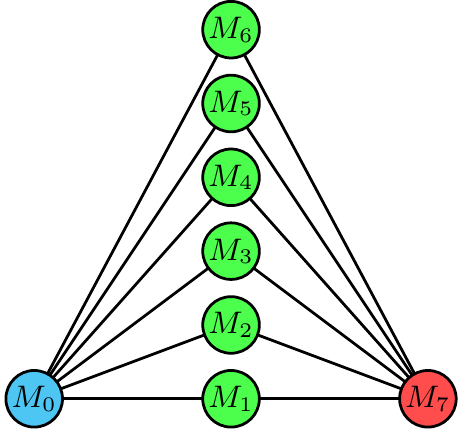}
    \caption{Illustration of the graph $G_6$ in our second example.}
    \label{fig:second.graph}
\end{figure}

\begin{thm}\label{limiting_Sn_thm.2nd.example}
Let $\{G_m\}_{m \geq 1}$ be the sequence of graphs described above and consider the construction from Section~\ref{sec:construction} where Condition~\ref{cond:F} is satisfied with $\ell=2$. Then,
    \begin{align}
        \xi_n
        &\stackrel{\mathrm{law}}{\longrightarrow}  \sqrt{2}I \cdot Z, \quad \text{as $m\to \infty$ (and thus as $n\to \infty$),}
    \end{align}
where the random variables $I\sim \textup{Bernoulli}\hspace{0.3mm}(1/2)$ and $Z\sim N(0,1)$ are independent.
\end{thm}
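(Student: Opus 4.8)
The plan is to exploit the very asymmetric structure of $G_m$: the only high-degree vertices are the two ``endpoints'' $M_0$ and $M_{m+1}$, so conditioning on the pair $(M_0,M_{m+1})$ decouples the $m$ middle vertices into an i.i.d.\ contribution. First I would record the standardization constants. Since $G_m$ has $n=2m$ edges and $\ell=2$, we have $n\ell^{-1}=m$ and $n\ell^{-1}(1-\ell^{-1})=m/2$, so that
\begin{equation*}
    \Xi_n = \sum_{j=1}^m \ind_{\{M_j = M_0\}} + \sum_{j=1}^m \ind_{\{M_j = M_{m+1}\}}, \qquad \xi_n = \frac{\Xi_n - m}{\sqrt{m/2}}.
\end{equation*}

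Next I would condition on $(M_0,M_{m+1})$ and split into two cases. On the event $\{M_0 \neq M_{m+1}\}$, each middle value $M_j$ (taking one of two possible values) coincides with exactly one of the two endpoints, so precisely one of the two indicators attached to $M_j$ equals $1$; summing over $j$ gives $\Xi_n = m$ \emph{deterministically}, hence $\xi_n = 0$. On the complementary event $\{M_0 = M_{m+1}\}$ the two indicators attached to each $M_j$ coincide, so $\Xi_n = 2\sum_{j=1}^m \ind_{\{M_j = M_0\}}$; conditionally on $M_0$ the summands are i.i.d.\ $\textup{Bernoulli}(1/2)$, and the classical CLT gives
\begin{equation*}
    \xi_n = \sqrt{2}\,\frac{\sum_{j=1}^m \ind_{\{M_j = M_0\}} - m/2}{\sqrt{m/4}} \stackrel{\mathrm{law}}{\longrightarrow} \sqrt{2}\,Z, \qquad Z\sim N(0,1),
\end{equation*}
this conditional limit being the same for either value of $M_0$.

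Finally I would assemble the mixture. Since $\PP(M_0 = M_{m+1}) = 1/2$, the tower property applied to characteristic functions gives, for every $t\in\R$,
\begin{equation*}
    \varphi_{\xi_n}(t) = \tfrac{1}{2}\,\E\big[e^{\ii t \xi_n} \mid M_0 = M_{m+1}\big] + \tfrac{1}{2}\,\E\big[e^{\ii t \xi_n} \mid M_0 \neq M_{m+1}\big] \longrightarrow \tfrac{1}{2}e^{-t^2} + \tfrac{1}{2},
\end{equation*}
which is exactly the characteristic function of $\sqrt{2}\,I Z$ with $I\sim\textup{Bernoulli}(1/2)$ independent of $Z$; L\'evy's continuity theorem then yields the claim.

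I expect the main obstacle to be the bookkeeping of the independence structure, namely arranging that the Gaussian limit $Z$ ends up independent of the mixing indicator $I$. The conditioning device resolves this cleanly: on each conditioning event the conditional law of $\xi_n$ converges to a \emph{single} limit (a point mass at $0$, respectively $\sqrt 2\,Z$), and the independent coin flip $\{M_0 = M_{m+1}\}$ supplies the weight $1/2$, so the product representation $\sqrt 2\,I Z$ is automatic from the resulting mixture. A secondary point is simply to check that the standardization absorbs the factor $2$ correctly, turning the $\textup{Binomial}(m,1/2)$ fluctuations (of scale $\sqrt{m}/2$) into the limit $\sqrt 2\,Z$.
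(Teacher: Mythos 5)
Your proposal is correct and is essentially the paper's own argument: the paper encodes your two-case analysis (endpoints equal vs.\ unequal) as the distributional identity $\Xi_n \stackrel{\mathrm{law}}{=} I\cdot 2B + (1-I)\cdot m$ with $I\sim\textup{Bernoulli}(1/2)$ independent of $B\sim\textup{Binomial}(m,1/2)$, then standardizes and applies the CLT and L\'evy's continuity theorem, exactly as you do via conditional limits and the mixture of characteristic functions.
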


\begin{proof}
If $I\sim \textup{Bernoulli}\hspace{0.3mm}(1/2)$ and $B \sim \textup{Binomial}\hspace{0.3mm}(m, 1/2)$ are independent r.v.s, then $\Xi_n$ satisfies
\begin{equation}\label{eq:second.example.representation}
    \Xi_n \stackrel{\mathrm{law}}{=} I \cdot 2 B + (1 - I) \cdot m.
\end{equation}
Indeed, if the Bernoulli r.v.s $M_0$ and $M_{m+1}$ are equal (this is represented by $I = 1$ in~\eqref{eq:second.example.representation}, which has probability~$1/2$), then for every vertex $M_1,M_2,\dots,M_m$ in the middle, the sum of the $1$'s on the two adjacent edges will be $2$ with probability $1/2$ and $0$ with probability~$1/2$. By the independence of the Bernoulli r.v.s $M_1,M_2,\dots,M_m$, we can thus represent the sum of the ``$m$ sums of $1$'s'' that we just described by $2 B$ where $B\sim \text{Binomial}\hspace{0.2mm}(m,1/2)$. Similarly, if the Bernoulli r.v.s $M_0$ and $M_{m+1}$ are not equal (this is represented by $I = 0$ in \eqref{eq:second.example.representation}, which has probability~$1/2$), then for every vertex $M_1,M_2,\dots,M_m$ in the middle, the sum of the $1$'s on the two adjacent edges will always be $1$ (either the left edge is $1$ and the right edge is $0$, or vice-versa, depending on whether $(M_0 = 1,M_{m+1}=0)$ or $(M_0 = 0,M_{m+1}=1)$). Since there are $m$ vertices in the middle when $I = 0$, the total sum of the $1$'s on the edges is always $m$. By combining the cases $I = 1$ and $I = 0$, we get the representation \eqref{eq:second.example.representation}.

Lastly, here $\E[\Xi_n] = m$ and $\Var(\Xi_n) = \frac{m}{2}$ so that, by L\'evy's continuity theorem,

\begin{equation}
\xi_n = \frac{\Xi_n - m}{\sqrt{\frac{m}{2}}} = \sqrt{2} I \cdot \frac{B - m/2}{\sqrt{\frac{m}{4}}} \stackrel{\mathrm{law}}{\longrightarrow} \sqrt{2} I \cdot Z, \quad \text{where } Z\sim N(0,1).
\end{equation}
This ends the proof.
\end{proof}

\begin{rem}
By Theorem~\ref{limiting_Sn_thm}, $S_n$ converges in law to a random variable:
 \begin{equation}\label{eq:limit.S.example.2}
                S \leqdef \sqrt{1 - r^2} Z_1 + r \sqrt{2} I Z_2,
            \end{equation}
            where the random variables $Z_1,Z_2\sim N(0,1)$ and $I\sim \textup{Bernoulli}\hspace{0.3mm}(1/2)$ are all independent, and $r \leqdef \frac{\mu_V - \mu_U}{2\sigma}$. Simple calculations then yield
    \begin{equation}
        \E[S] = 0, \quad \E[S^2] = 1, \quad \E[S^3] = 0 \quad \text{and} \quad \E[S^4] = 3 (1 + r^4),
    \end{equation}
    so that $S$ in \eqref{eq:limit.S.example.2} is always heavier tailed than a standard Gaussian r.v.\ (provided $r \neq 0$, which is not a stringent requirement as seen in Remark~\ref{rem:main.thm.1}).
\end{rem}

\subsection{Third example}\label{sec:example.3}
In our construction, a CLT can hold. As a `positive example', we consider here the sequence of $m$-hypercube graphs, which have $v(m)=2^m$ vertices and $n=m 2^{m-1}$ edges. Despite being `highly connected’, these graphs \textit{do} induce a Gaussian limit for $\{S_n\}_{n\geq1}$.

\begin{thm}\label{limiting_Sn_thm.hypercube.example}
Let $\{G_m\}_{m\geq 1}$ be the sequence of $m$-hypercube graphs and consider the construction from Section~\ref{sec:construction} where Condition~\ref{cond:F} is satisfied with $\ell=2$. Then, $\xi_n$ is asymptotically Gaussian as $m\to \infty$ (and thus as $n\to \infty$).
\end{thm}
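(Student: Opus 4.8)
The plan is to reduce $\xi_n$ to a quadratic-type statistic in i.i.d.\ signs and then run the method of moments. Since $\ell=2$, I would set $\varepsilon_v \leqdef 1-2M_v$ for each vertex $v$ of the hypercube $Q_m$, so that the $\varepsilon_v$ are i.i.d.\ uniform on $\{-1,+1\}$ and, for an edge $e=(u,v)$, $D_e-\tfrac12=\tfrac12\varepsilon_u\varepsilon_v$. Summing over the $n=m2^{m-1}$ edges and using $\ell^{-1}(1-\ell^{-1})=\tfrac14$ gives the clean identity
\[
\xi_n=\frac{\Xi_n-n/2}{\sqrt{n/4}}=\frac{T_m}{\sqrt n},\qquad T_m\leqdef\sum_{(u,v)\in\mathrm{Edges}(Q_m)}\varepsilon_u\varepsilon_v,
\]
so it suffices to show $T_m/\sqrt n\stackrel{\mathrm{law}}{\longrightarrow}N(0,1)$. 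Odd moments are handled for free by a symmetry: the hypercube is bipartite, and flipping $\varepsilon_w\mapsto-\varepsilon_w$ on one color class of the bipartition sends each $\varepsilon_u\varepsilon_v\mapsto-\varepsilon_u\varepsilon_v$ (every edge has exactly one endpoint in that class), hence $T_m\stackrel{\mathrm{d}}{=}-T_m$. Thus $T_m$ is symmetric and all its odd moments vanish, matching those of $N(0,1)$.

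For the even moments I would expand $\E[T_m^{2p}]=\sum_{e_1,\dots,e_{2p}}\E\big[\prod_{i}\varepsilon_{u_i}\varepsilon_{v_i}\big]$. Because $\E[\varepsilon_v^{\,j}]$ is $1$ for even $j$ and $0$ for odd $j$, a term is nonzero (and then equal to $1$) exactly when every vertex appears an even number of times among the $2p$ chosen edges, i.e.\ when the multiset $\{e_1,\dots,e_{2p}\}$ is an \emph{even subgraph} of $Q_m$. So $\E[T_m^{2p}]$ counts ordered $2p$-tuples of edges whose multiset is even. The leading contribution comes from choosing $p$ distinct edges each used twice: there are $(2p-1)!!$ ways to pair the $2p$ positions and $n(n-1)\cdots(n-p+1)\sim n^p$ ways to assign distinct edges, giving $(2p-1)!!\,n^p(1+o(1))$. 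Granting that every other even configuration is $o(n^p)$, I obtain $\E[(T_m/\sqrt n)^{2p}]\to(2p-1)!!$; since these are the Gaussian moments and $N(0,1)$ is moment-determinate, the convergence $\xi_n\to N(0,1)$ follows.

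The hard part will be showing that all non-leading even configurations are negligible. I would decompose an even multiset of $2p$ edges into its connected components $C_1,\dots,C_t$, where $C_i$ occupies $k_i$ positions (so $\sum_i k_i=2p$) and spans $a_i$ distinct vertices. The crucial geometric input is that $Q_m$ has degree $m$, so a connected shape on $a_i$ vertices has at most $2^m m^{\,a_i-1}$ embeddings (pick a root, then grow a spanning tree, each new vertex costing a factor $\le m$); since $2^m=2n/m$, this is $\le 2n\,m^{\,a_i-2}$, and the whole configuration admits $O\big(n^{t}m^{\,A-2t}\big)$ embeddings with $A=\sum_i a_i$. The key observation is that $m=\Theta(\log n)$, so the factor $m^{A-2t}$ is only polylogarithmic and any configuration with $t\le p-1$ is $o(n^p)$. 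Finally, each component is connected and even, hence uses $k_i\ge 2$ positions, so $t=p$ forces every $k_i=2$; but the only connected even shape on two edge-slots is a doubled edge, which recovers exactly the leading term. Hence every non-leading even configuration has at most $p-1$ components and is negligible. The delicate point is calibrating the embedding bound correctly, and here the girth-$4$ (indeed bipartite) structure is exactly what prevents cheap short cycles from competing with doubled edges.

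As a cross-check and a shorter alternative, I would note that a dependency-graph central limit theorem applies directly: $D_e$ is a function of $(\varepsilon_u,\varepsilon_v)$ alone, hence independent of every $D_{e'}$ with $e'$ disjoint from $e$, so the $\{D_e\}$ admit a dependency graph of maximum degree $2(m-1)$. With bounded summands, $\Var(\Xi_n)=n/4$, and dependency degree $O(\log n)$, a Stein's-method estimate for sums with local dependence yields a Berry--Esseen-type rate of order $m^2/\sqrt n\to 0$, giving $\xi_n\to N(0,1)$ with an explicit speed of convergence.
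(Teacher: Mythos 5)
Your proposal is correct, and it takes a genuinely different route from the paper. The paper works with $\widetilde{\Xi}_m = 2\Xi_n - n$ (your $T_m$) and decomposes it by coordinate directions, $\widetilde{\Xi}_m = \sum_{d=1}^m \widetilde{\Xi}_m^{(d)}$, where $\widetilde{\Xi}_m^{(d)}$ sums the $\pm 1$ edge variables over edges in direction $d$; a sign-flipping (conjugation) argument shows these are martingale differences with respect to the filtration generated by the successive directions, and after verifying negligibility of the maximal increment (union bound plus a fourth-moment Markov inequality), a weak law for the sum of squared increments, and uniform integrability of the maximum, the martingale CLT of Hall and Heyde \citep[Theorem~3.2]{MR624435} concludes. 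Your method of moments replaces this probabilistic structure with combinatorics: even-subgraph counting, with the leading term from $p$ doubled edges and all other configurations killed by the per-component embedding bound $2^m m^{a_i-1}$ together with $m = \Theta(\log n)$ --- the same ``degree is logarithmically small compared to $n$'' phenomenon that powers the paper's conditions (a) and (b). Your bookkeeping is sound: any non-leading even configuration has at most $p-1$ components (since $t=p$ forces all $k_i = 2$, hence $p$ distinct doubled edges), so it contributes $O\big(n^{p-1}\,\mathrm{polylog}(n)\big)$, while the leading count $(2p-1)!!\,n(n-1)\cdots(n-p+1)$ is exact even when the doubled edges share vertices; and the bipartite sign-flip disposing of odd moments is, incidentally, an unconditional analogue of the paper's martingale conjugation trick. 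One small correction: $Q_m$ has girth exactly $4$, so $4$-cycles do exist and do yield even configurations; they are negligible not because of girth or bipartiteness but because they are connected and thus cost a component --- girth enters the paper's construction only to secure triplewise independence of the $D$'s. Finally, your dependency-graph alternative is also valid (families of $D_e$'s indexed by vertex-disjoint edge sets are genuinely independent, so the dependency graph has maximum degree $2(m-1)$), and given off-the-shelf Stein's-method bounds for local dependence it is arguably the shortest proof; it moreover yields a quantitative Berry--Esseen rate $O(m^2/\sqrt{n}) = O((\log n)^2/\sqrt{n})$, which is strictly more information than the paper's qualitative convergence statement.
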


\begin{proof}

First, note that each vertex of $G_m$ can be represented by a binary vector of $m$ components. To be clear here, the hypercube graphs are all embedded in the same infinite dimensional hypercube graph, and the same goes for the Bernoulli r.v.s $M_1, M_2, \dots, M_{2^m}$ assigned to the vertices. By definition of the $m$-hypercube graph, $(i,j)$ is an edge if and only if $i$ and $j$ differ by only one binary component, which we write $i\sim j$ for short. In particular, we write $i\sim_d j$ if $i$ and $j$ differ only in the $d$-th binary component, where $1 \leq d \leq m$. With $\Xi_n$ and $D_{i,j}$ defined as in \eqref{the_D_sequence} and \eqref{eq:define.Dij}, respectively, it will be useful here to work instead with the zero-mean r.v.s, $\widetilde{\Xi}_n$ and $\widetilde{D}_{i,j}$, defined as
\vspace{-2mm}
\begin{equation}
    \widetilde{\Xi}_m = 2 \, \Xi_n - n = \sum_{i\sim j} \widetilde{D}_{i,j}, \quad \text{and} \quad \widetilde{D}_{i,j} = 2 D_{i,j} -1 =
    \begin{cases}
    1, & \textup{if } M_i = M_j, \\
    -1, & \textup{otherwise}.
    \end{cases}
\end{equation}
We will prove below that $\widetilde{\Xi}_m$ is asymptotically Gaussian, which implies that $\xi_n$ is as well. We have the following decomposition:
\begin{equation}
    \widetilde{\Xi}_m = \sum_{d=1}^m \widetilde{\Xi}_m^{(d)}, \quad \text{where} \quad \widetilde{\Xi}_m^{(d)} \leqdef \sum_{i\sim_d j} \widetilde{D}_{i,j}.
\end{equation}
Let $\mathcal{G}_d = \sigma(\widetilde{D}_{i,j} : i\sim_d j)$, and let $\mathcal{F}_m \leqdef \sigma(\cup_{d=1}^m \mathcal{G}_d)$ be the smallest $\sigma$-algebra containing the sets of all the $\mathcal{G}_d$'s, for $1 \leq d \leq m$. Then, $\mathbb{F} = \{\mathcal{F}_m\}_{m\in \N_0}$ is a filtration, where we define $\mathcal{F}_0 \leqdef \{\emptyset, \Omega\}$.
We have the following preliminary result (we complete the proof of Theorem~\ref{limiting_Sn_thm.hypercube.example} right after).

\begin{lem}\label{lem:martingale.hypercube}
    If $\widetilde{\Xi}_0 \leqdef 0$, then for every $m\in \N_0$, the process $\{\widetilde{\Xi}_k / \sqrt{\Var(\widetilde{\Xi}_m)}\}_{0 \leq k \leq m}$ is a zero-mean and bounded $\mathbb{F}$-martingale with differences $\widetilde{\Xi}_m^{(d)} / \sqrt{\Var(\widetilde{\Xi}_m)}, ~ 1 \leq d \leq m$.
\end{lem}

\begin{proof}[Proof of Lemma~\ref{lem:martingale.hypercube}]
    The process $\{\widetilde{\Xi}_m\}_{m\in \N_0}$ is trivially $\mathbb{F}$-adapted and integrable.
    To conclude that it is a $\mathbb{F}$-martingale, it is sufficient to show that
    \begin{equation}
        \E[\widetilde{\Xi}_m^{(k)} | \mathcal{F}_{k-1}] = 0, \quad \text{for all } 1 \leq k \leq m.
    \end{equation}
    By symmetry of the construction, the case $k = 1$ is trivial (i.e., $\E[\widetilde{\Xi}_m^{(1)}] = 0$). Therefore, assume that $k \geq 2$.
    Consider any instance $\omega\in \Omega$ for the values of the Bernoulli r.v.s on the vertices of the $m$-hypercube such that $\sum_{d=1}^{k-1} \widetilde{\Xi}_m^{(d)} (\omega) = s$ and $\widetilde{\Xi}_m^{(k)}(\omega) = t$, where $s,t$ are any specific integer values.
    For every such instance $\omega$, there exists a `conjugate' instance $\overline{\omega}$ where $\sum_{d=1}^{k-1} \widetilde{\Xi}_m^{(d)} (\overline{\omega}) = s$ and $\widetilde{\Xi}_m^{(k)}(\overline{\omega}) = - t$. Indeed, take the configuration $\omega$, then for every vertex that has its $k$-th binary component equal to $1$, flip the result of the Bernoulli r.v.\ ($0$ under $\omega$ becomes $1$ under $\overline{\omega}$, and $1$ under $\omega$ becomes $0$ under $\overline{\omega}$). Since the Bernoulli r.v.s on the vertices are i.i.d., and the values $0$ and $1$ are equiprobable, note that $\PP(\{\omega\} | \mathcal{F}_{k-1})(u) = \PP(\{\overline{\omega}\} | \mathcal{F}_{k-1})(u)$ for all $u\in \Omega$ such that $\sum_{d=1}^{k-1} \widetilde{\Xi}_m^{(d)} (u) = s$.
    Therefore, for any summand of the form $\widetilde{\Xi}_m^{(k)}(\omega) \cdot \PP(\{\omega\} | \mathcal{F}_{k-1})(u)$ in the calculation of $\E[\widetilde{\Xi}_m^{(k)} | \mathcal{F}_{k-1}](u)$, it will always be cancelled by $\widetilde{\Xi}_m^{(k)}(\overline{\omega}) \cdot \PP(\{\overline{\omega}\} | \mathcal{F}_{k-1})(u)$.
    Since we assumed nothing on $s$, we must conclude that $\E[\widetilde{\Xi}_m^{(k)} | \mathcal{F}_{k-1}] = 0$.
\end{proof}

Aside from Lemma~\ref{lem:martingale.hypercube}, we also have the following three properties related to the increments of the process $\big\{\widetilde{\Xi}_k / \sqrt{\Var(\widetilde{\Xi}_m)}\big\}_{0 \leq k \leq m}$:
\begin{enumerate}[(a)]
    \item $\max_{1 \leq d \leq m} \frac{\widetilde{\Xi}_m^{(d)}}{\sqrt{\Var(\widetilde{\Xi}_m)}} \stackrel{\PP}{\longrightarrow} 0$. Indeed, by a union bound and Markov's inequality with exponent $4$, we have, for any $\varepsilon > 0$,
    \begin{equation*}
        \begin{aligned}
            \PP\Bigg(\max_{1 \leq d \leq m} \bigg|\frac{\widetilde{\Xi}_m^{(d)}}{\sqrt{\Var(\widetilde{\Xi}_m)}}\bigg| > \varepsilon\Bigg) &\leq m \cdot \PP\Bigg(\bigg|\frac{\widetilde{\Xi}_m^{(1)}}{\sqrt{\Var(\widetilde{\Xi}_m)}}\bigg| > \varepsilon\Bigg) \leq m \cdot \frac{\E\big[(\widetilde{\Xi}_m^{(1)})^4\big]}{\varepsilon^4 m^2 \big(\E\big[(\widetilde{\Xi}_m^{(1)})^2\big]\big)^2} \leq \frac{C}{\varepsilon^4 m} \stackrel{m\to\infty}{\longrightarrow} 0,
        \end{aligned}
    \end{equation*}
    where $C > 0$ is a universal constant.
    \item By the weak law of large numbers for weakly correlated r.v.s with finite variance, and the fact that $\Var(\widetilde{\Xi}_m) = m \Var(\widetilde{\Xi}_m^{(d)}) = m \, \E[(\widetilde{\Xi}_m^{(d)})^2]$ for all $1 \leq d \leq m$, we have
        \begin{equation*}
            \sum_{d=1}^m \frac{(\widetilde{\Xi}_m^{(d)})^2}{\Var(\widetilde{\Xi}_m)} = \frac{1}{m} \sum_{d=1}^m \frac{(\widetilde{\Xi}_m^{(d)})^2}{\E[(\widetilde{\Xi}_m^{(d)})^2]} \stackrel{\PP}{\longrightarrow} 1, \quad \text{as } m\to \infty.
        \end{equation*}
    \item $\E\big[\max_{1 \leq d \leq m} \frac{(\widetilde{\Xi}_m^{(d)})^2}{\Var(\widetilde{\Xi}_m)}\big]$ is bounded in $m$. Indeed,
    \begin{equation*}
        \E\bigg[\max_{1 \leq d \leq m} \frac{(\widetilde{\Xi}_m^{(d)})^2}{\Var(\widetilde{\Xi}_m)}\bigg] \leq \frac{\E\big[\sum_{d=1}^m (\widetilde{\Xi}_m^{(d)})^2\big]}{\Var(\widetilde{\Xi}_m)} = \frac{\Var(\widetilde{\Xi}_m)}{\Var(\widetilde{\Xi}_m)} = 1 < \infty.
    \end{equation*}
\end{enumerate}
By Lemma~\ref{lem:martingale.hypercube}, $(a)$, $(b)$, $(c)$, and the central limit theorem for martingale arrays \citep[Theorem~3.2]{MR624435}, we conclude that
\begin{equation}\label{eq:martingale.limit.CLT.hypercube}
    \frac{\widetilde{\Xi}_m}{\sqrt{\Var(\widetilde{\Xi}_m)}} \stackrel{\mathrm{law}}{\longrightarrow} N(0,1), \quad \text{as } m\to \infty.%
    \footnote{Approximately four days after we came up with this proof, Yuval Peres provided an interesting and completely different proof of \eqref{eq:martingale.limit.CLT.hypercube} (not using martingales) in the following MathStackExchange post:

    \noindent
    \href{https://math.stackexchange.com/questions/3993902/central-limit-theorem-for-dependent-bernoulli-random-variables-on-the-edges-of-a}{https://math.stackexchange.com/questions/3993902/central-limit-theorem-for-dependent-bernoulli-random-variables-on-the-edges-of-a}.}
\end{equation}
This ends the proof of Theorem~\ref{limiting_Sn_thm.hypercube.example}.
\end{proof}

\subsection{Fourth example}



Figure~\ref{fig:third.graph} shows a graph which can easily be made arbitrarily large (displayed here for $m=6$). We have the following theorem.

\begin{thm}\label{limiting_Sn_thm.3nd.example}
    Consider the construction from Section~\ref{sec:construction} where Condition~\ref{cond:F} is satisfied with $\ell=2$. Let the graphs $G_m$ be defined as described in the caption of Figure~\ref{fig:third.graph}. Then, $\xi_n$ is asymptotically Gaussian.
\end{thm}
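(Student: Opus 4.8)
The plan is to prove a central limit theorem for $\Xi_n = \sum_{e} D_e$ directly, exploiting the fact that for this particular graph the edge-indexed variables $\{D_e\}$ have a \emph{sparse} dependency structure. The starting observation is that $D_e$ and $D_{e'}$ are independent whenever the edges $e$ and $e'$ are vertex-disjoint, since they are then Borel functions of disjoint families of the i.i.d.\ Bernoulli r.v.s on the vertices. Consequently the line graph $L(G_m)$ --- in which two edges of $G_m$ are joined precisely when they share a vertex --- is a \emph{dependency graph} for the collection $\{D_e\}$ in the sense of Stein's method.

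First I would record the exact first two moments. Because girth $\geq 4$ guarantees, via condition~\eqref{eq:D.condition.2} in Remark~\ref{rem:threeconds}, that any two $D_e$'s are pairwise independent, they are in particular uncorrelated, so $\E[\Xi_n] = n \ell^{-1}$ and, with no cross-covariances surviving, $\Var(\Xi_n) = n \ell^{-1}(1 - \ell^{-1})$ \emph{exactly}. Hence the normalization already built into $\xi_n$ in \eqref{sec:define.Xi_n} is the correct one, the limiting variance is $1$, and non-degeneracy is automatic; it only remains to establish asymptotic normality of the centered, unit-variance sum.

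Next I would read off from Figure~\ref{fig:third.graph} that $G_m$ has maximum vertex degree bounded by a constant $\delta_0$, uniformly in $m$. Each edge $e = \{u,v\}$ then shares a vertex with at most $2(\delta_0 - 1)$ other edges, so $L(G_m)$ has maximum degree $\Delta \le 2(\delta_0 - 1)$, \emph{bounded in $m$}, while the number of edges $n \to \infty$. With the summands bounded ($0 \le D_e \le 1$) and $\Var(\Xi_n) \asymp n$, I would invoke a normal-approximation bound for sums over a dependency graph (e.g.\ via Stein's method, in the Baldi--Rinott form), whose right-hand side --- for summands bounded by $1$, maximal dependency-degree $\Delta$ bounded in $m$, and $\Var(\Xi_n) \asymp n$ --- is of order $n^{-1/2} \to 0$. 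This yields $\xi_n \stackrel{\mathrm{law}}{\longrightarrow} N(0,1)$.

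The main obstacle is the geometric/combinatorial verification that the chosen $G_m$ genuinely has bounded vertex degree, so that $\Delta$ stays bounded: this is exactly the feature that distinguishes this `positive' example from the complete-bipartite and two-hub graphs of Sections~\ref{sec:example.1}--\ref{sec:example.2}, where high-degree `hub' vertices create a dense dependency graph and a non-Gaussian limit. Should the figure instead depict an essentially one-dimensional chain, the cleanest alternative is to observe that $\{D_e\}$, suitably ordered, is a $1$-dependent sequence that is stationary away from the two endpoints, and to apply the classical $m$-dependent CLT of Hoeffding--Robbins; the conclusion is identical.
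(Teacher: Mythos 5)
Your proposal breaks down at its central combinatorial claim. The graph $G_m$ of Figure~\ref{fig:third.graph} does \emph{not} have bounded maximum degree: the two bottom vertices $M_0$ and $M_{2m+1}$ are hubs of degree $m+1$ (each is joined to all $m$ vertices on its side plus to the other hub). Consequently the line graph $L(G_m)$ has maximum degree of order $m \asymp n$, and the Baldi--Rinott/Stein dependency-graph bound, which scales like a positive power of $\Delta$ divided by a power of $\sigma \asymp \sqrt{n}$, does not tend to $0$; it blows up. Your fallback is equally unavailable: the graph is nothing like a chain, so no $1$-dependent (or $m$-dependent) ordering of the $D_e$'s exists. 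In fact, no argument that uses only the sparseness of the dependency structure can possibly work here, because the two-hub graph of Section~\ref{sec:example.2} has essentially the same dependency profile (two hubs of degree $m$, all other degrees $\le 2$) and yet produces the \emph{non-Gaussian} limit $\sqrt{2}\,I\cdot Z$. Gaussianity in the present example is a consequence of the specific geometry, not of weak dependence. (Your moment computations are fine --- girth $4$ gives pairwise independence, hence $\Var(\Xi_n)=n\ell^{-1}(1-\ell^{-1})$ exactly, matching the paper's $\tfrac{3m}{4}+\tfrac14$ with $n=3m+1$ --- but they are not where the difficulty lies.)

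The paper's proof instead conditions on the event $\{M_0 = M_{2m+1}\}$, encoded by $I\sim\textup{Bernoulli}(1/2)$, and derives the exact representation
\begin{equation}
    \Xi_n \stackrel{\mathrm{law}}{=} I\cdot(1 + m + 2B) + (1-I)\cdot 2(m-B), \qquad B\sim\textup{Binomial}(m,1/4) \text{ independent of } I,
\end{equation}
by analyzing the $m$ four-cycles through the two hubs. The decisive structural point --- and the one your approach cannot see --- is that the top ``rung'' edges $(M_i, M_{m+i})$ inject Binomial randomness into \emph{both} conditional branches: given $I=1$ the count is $1+m+2B$, given $I=0$ it is $2(m-B)$, and these have conditional means differing only by the constant $1$ and identical conditional variances $\tfrac{3m}{4}$. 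After standardization both branches converge to $N(0,1)$, and since $(2I-1)Z \stackrel{\mathrm{law}}{=} Z$ the mixture collapses to a single Gaussian. Compare Section~\ref{sec:example.2}, where the branch $I=0$ is \emph{deterministic} ($\Xi_n = m$), so the two branches have wildly different variances and the mixture survives in the limit. Identifying and exploiting this conditional-representation structure is the missing idea; without it your proof does not go through.
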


\begin{figure}[H]
    \centering
    \includegraphics[width=100mm]{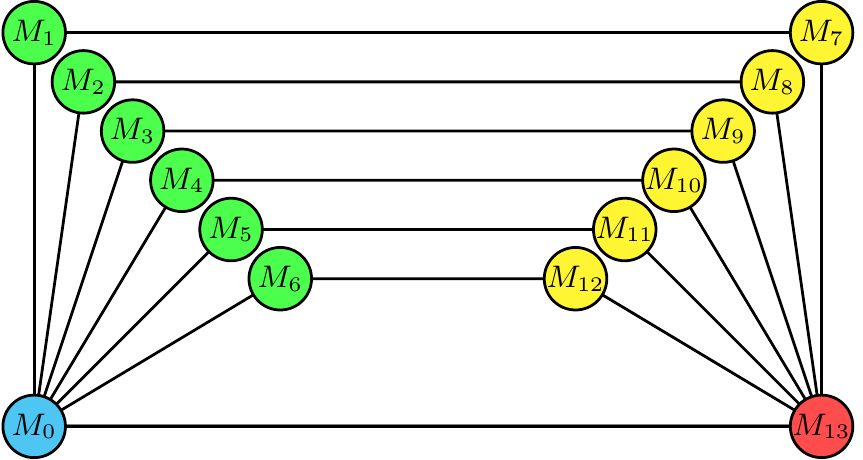}
    \caption{Illustration for $m = 6$ of the general construction where the vertex $M_0$ (on the bottom left) is linked by an edge to $M_{2m+1}$ (on the bottom right), every vertex in the set $\{M_1,M_2,\dots,M_m\}$ (on the top left) is linked by an edge to the vertex $M_0$ (on the bottom left), every vertex in the set $\{M_{m+1},M_{m+2},\dots,M_{2m}\}$ (on the top right) is linked by an edge to the vertex $M_{2m+1}$ (on the bottom right), and $M_i$ (on the top left) is linked by an edge to $M_{m+i}$ (on the top right) for all $1 \leq i \leq m$.}
    \label{fig:third.graph}
\end{figure}

\begin{proof}
If $I\sim \textup{Bernoulli}\hspace{0.3mm}(1/2)$ and $B \sim \textup{Binomial}\hspace{0.3mm}(m, 1/4)$ are independent r.v.s, then the number of $1$'s on the edges satisfies
\begin{equation}\label{eq:third.example.representation}
    \Xi_n \stackrel{\mathrm{law}}{=} I \cdot (1 + m + 2 B) + (1 - I) \cdot 2 (m - B).
\end{equation}
Indeed, if the Bernoulli r.v.s $M_0$ and $M_{2m+1}$ are equal in Figure~\ref{fig:third.graph} (this is represented by $I = 1$ in \eqref{eq:third.example.representation}, which has probability $1/2$), then for each of the $m$ $4$-cycles in the graph, the sum of the $1$'s on the left, top and right edges will be $3$ with probability $1/4$ and $1$ with probability $3/4$. By the independence of the Bernoulli r.v.s on the top-left and top-right corners of the $4$-cycles, we can thus represent the sum of the ``$m$ sums of $1$'s'' that we just described by $m + 2 B$ where $B\sim \text{Binomial}\hspace{0.2mm}(m,1/4)$. We get $1 + m + 2 B$ by including the `$1$' for the bottom edge $(M_0,M_{2m+1})$, which we only count once since this edge is common to all the $4$-cycles. Similarly, if the Bernoulli r.v.s $M_0$ and $M_{2m+1}$ are not equal in Figure~\ref{fig:third.graph} (this is represented by $I = 0$ in \eqref{eq:third.example.representation}, which has probability $1/2$), then for each of the $m$ $4$-cycles in the graph, the sum of the $1$'s on the left, top and right edges will be $2$ with probability $3/4$ and $0$ with probability $1/4$. By the independence of the Bernoulli r.v.s on the top-left and top-right corners of the $4$-cycles, we can thus represent the sum of the ``$m$ sums of $1$'s'' that we just described by $2 (m - B)$ since $m - B\sim \text{Binomial}\hspace{0.2mm}(m,3/4)$.
 By combining the cases $I = 1$ and $I = 0$, we get the representation \eqref{eq:third.example.representation}.

Easy calculations then yield
\begin{equation}
    \E[\Xi_n] = \frac{3m}{2} + \frac{1}{2} \quad \text{and} \quad \Var(\Xi_n) = \frac{3m}{4} + \frac{1}{4}.
\end{equation}
Hence, by L\'evy's continuity theorem,
\begin{equation}
    \xi_n = \frac{\Xi_n - \big(\frac{3m}{2} + \frac{1}{2}\big)}{\sqrt{\frac{3m}{4} + \frac{1}{4}}} = \frac{(2 I - 1) \cdot 2 (B - \frac{m}{4}) + (I - \frac{1}{2})}{\sqrt{\frac{3m}{4} + \frac{1}{4}}} \stackrel{\mathrm{law}}{\longrightarrow} (2 I - 1) \cdot W \stackrel{\mathrm{law}}{=} Z,
\end{equation}
where $W,Z\sim N(0,1)$.
\end{proof}


\section{The general case \texorpdfstring{$K\geq 4$}{K >= 4}} \label{sec:why.cannot.generalize}
One can easily adapt the methodology presented in this paper to build new sequences of $K$-tuplewise independent random variables (with an arbitrary margin $F$).  Indeed, all one needs to do is find a growing sequence of simple graphs of girth $K+1\geq 5$ and then, as before, put i.i.d.\ discrete uniforms on the vertices and assign $1$'s to edges for which the r.v.s on the adjacent vertices are equal. A girth of $K+1$ guarantees $K$-tuplewise independence of the sequences hence created. An arbitrary margin $F$ can be obtained as before by defining sequences $\{U_j\}_{j \geq 1}$ and $\{V_j\}_{j \geq 1}$ as in \eqref{eq:U_V_sequence}, and then creating the final sequence $\{X_j\}_{j \geq 1}$ as in \eqref{the_X_sequence}.

Whether or not sequences created this way will satisfy a CLT is a different (and difficult) question. In \citep{Balbuena_2008}, the author constructs explicitly an infinite collection of simple connected regular graphs of girth $6$ and diameter $3$, which we denote by $G_q$, where the index $q$ runs over the possible prime powers. These graphs are obtained as the incidence graphs of projective planes of order $q = k-1$. For any given prime power $q$, the graph $G_q$ is $(q+1)$-regular and has $2 \cdot (q^2 + q + 1)$ vertices. In particular, it is a $(k,6)$-cage because the number of vertices achieves the Moore (lower) bound, see, e.g., \citet[Chapter 23]{Biggs_1993}. This extremely uncommon sequence of graphs would be the perfect candidate for our construction to display a limiting non-Gaussian law for the normalized sum $S_n$. Indeed, in addition to having a minimal number of vertices, these graphs $G_q$ also have a constant (and finite) diameter, which means that we do not  have strong mixing of the binary random variables $D_j$ assigned to the edges (strong mixing is the most common assumption for a CLT with dependent random variables, see, e.g., \citet{Rosenblatt_1956}). However, even in this context where the edges' dependence is, in a sense, maximized (because of the constant diameter and the minimal number of vertices), our simulations show that we cannot reject the hypothesis of a Gaussian limit for $S$. We applied the following normality tests with $q = 2^6$ (which corresponds to a sample of size $n=(q+1)(q^2+q+1) = 270,\!465)$ and $5,\!000$ samples:
\begin{table}[ht]
    \centering
    \begin{tabular}{c|ccc}
        test & Shapiro-Wilk & Anderson-Darling & Pearson chi-square \\ \hline
        test statistic & 0.9997 & 0.2993 & 67.9360 \\[-0.5mm]
        p-value & 0.7148 & 0.5846 & ~0.7602 \\
    \end{tabular}
\end{table}

\noindent
For the interested reader, the code is provided in Appendix~\ref{appendix_computing_code}.

\begin{rem}\label{rem:link.graph.theory}
    There seems to be a link between the fact that examples of asymptotic non-normality of $\{S_n\}_{n\geq 1}$ exist for $K \leq 3$ (girth $g \leq 4$) but not for $K \geq 4$ (girth $g \geq 5$), and the fact that there exists growing sequences of regular graphs $G_m$ of girth $g \leq 4$ where \begin{equation}\label{eq:high_connectivity}
        \liminf_{m\to \infty} \frac{\mathrm{degree}(G_m)}{\text{\# of  vertices of } G_m} > 0,
    \end{equation}
    (the $\liminf_{n\to \infty}$ here is certainly a measure of the connectivity of the graphs $G_m$'s), whereas we always have
    \begin{equation*}
        \lim_{m\to \infty} \frac{\mathrm{degree}(G_m)}{\text{\# of  vertices of } G_m}= 0,
    \end{equation*}
    for regular graphs of girth $g\geq 5$, see, e.g., \citet[Proposition 23.1]{Biggs_1993}. This dichotomy in the statistics context (and its link to graph theory) seems to be a completely new and promising observation.
\end{rem}

\begin{rem}
In contrast to the sequence of graphs in our first example (Section~\ref{sec:example.1}), the sequence of hypercube graphs in our third example (Section~\ref{sec:example.3}) \textit{do not} satisfy \eqref{eq:high_connectivity}.
The property \eqref{eq:high_connectivity} in a sense measures the connectivity of the graphs, and therefore the level of dependence between the r.v.s $D_{i,j}$ assigned to the edges in our construction. Since \eqref{eq:high_connectivity} \textit{cannot} be satisfied for $K \geq 4$ when the underlying graphs are regular, the third example reinforces our intuition that, for $K\geq 4$, the sequence $\{\xi_n\}_{n\geq 1}$ (and thus $S_n$) will always converge to a Gaussian random variable.
\end{rem}

\appendix

    \section{The variance-gamma distribution}\label{sec:variance.gamma}

    \begin{defin}\label{def:variance.gamma.distribution}
        The variance-gamma distribution with parameters $\alpha > 0$, $\theta\in \R$, $s > 0$, $c\in \R$ has the density function
        \begin{equation}
            f(x) \leqdef \frac{1}{s \sqrt{\pi} \hspace{0.2mm}\Gamma(\alpha/2)} e^{\frac{\theta}{\alpha^2}(x - c)} \bigg(\frac{|x - c|}{2 \sqrt{\theta^2 + s^2}}\bigg)^{\hspace{-1mm}\frac{\alpha-1}{2}} K_{\frac{\alpha-1}{2}}\bigg(\frac{\sqrt{\theta^2 + s^2}}{s^2} |x - c|\bigg), \quad x\in \R,
        \end{equation}
        where $K_{\nu}$ is the modified Bessel function of the second kind of order $\nu$.
        If a certain random variable $X$ has this distribution, then we write $X\sim \mathrm{VG}(\alpha,\theta,s^2,c)$.
    \end{defin}

    We have the following result, which is a consequence (for example) of Theorem 1 in \citep{MR3942099}.

    \begin{lem}\label{lem:variance.gamma.result}
        Let $W_1, W_2, \dots, W_n\stackrel{\textup{i.i.d.}}{\sim} N(0,s^2)$ and $Z_1, Z_2, \dots, Z_n \stackrel{\textup{i.i.d.}}{\sim} N(0,s^2)$ be two independent sequences, then $Q_n \leqdef \sum_{i=1}^n W_i Z_i\sim \mathrm{VG}(n,0,s^2,0)$, following Definition~\ref{def:variance.gamma.distribution}, and the density function of $Q_n$ is given by
        \begin{equation}
            f_{Q_n}(x) = \frac{1}{s^2 \sqrt{\pi} \hspace{0.2mm}\Gamma(n/2)} \bigg(\frac{|x|}{2 s^2}\bigg)^{\hspace{-1mm}\frac{n-1}{2}} K_{\frac{n-1}{2}}\bigg(\frac{|x|}{s^2}\bigg), \quad x\in \R.
        \end{equation}
        It is easy to verify that the characteristic function of $Q_n$ is given by
        \begin{equation}\label{eq:characteristic.variance.gamma.result}
            \varphi_{Q_n}(t) = (1 + s^4 t^2)^{-n/2}, \quad t\in \R,
        \end{equation}
        and the expectation and variance are given by
        \begin{equation}
            \E[Q_n] = 0 \quad \text{and} \quad \Var[Q_n] = n \hspace{0.3mm} s^4.
        \end{equation}
    \end{lem}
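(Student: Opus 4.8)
The plan is to obtain the characteristic function of $Q_n$ by a short conditioning argument, identify it with the variance-gamma law so as to read off the density, and recover the moments directly from independence.

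First I would handle a single product $W_1 Z_1$. Since $W_1\sim N(0,s^2)$ is independent of $Z_1$, conditioning on $Z_1$ and using the Gaussian characteristic function gives
\[
\varphi_{W_1 Z_1}(t) = \E\big[\E[e^{\ii t W_1 Z_1}\mid Z_1]\big] = \E\big[e^{-\frac{1}{2} s^2 t^2 Z_1^2}\big].
\]
Writing $Z_1^2 = s^2 X$ with $X\sim \chi^2_1$ and applying the chi-squared moment generating function $\E[e^{uX}] = (1-2u)^{-1/2}$ at $u = -\tfrac{1}{2} s^4 t^2 < 0$ yields $\varphi_{W_1 Z_1}(t) = (1+s^4 t^2)^{-1/2}$. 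Because the pairs $(W_i,Z_i)$ are mutually independent across $i$, the products $W_iZ_i$ are independent, so $\varphi_{Q_n}(t) = \prod_{i=1}^n \varphi_{W_iZ_i}(t) = (1+s^4t^2)^{-n/2}$, which is \eqref{eq:characteristic.variance.gamma.result}.

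Next I would identify this characteristic function with that of the $\mathrm{VG}(n,0,s^2,0)$ law. Specializing Definition~\ref{def:variance.gamma.distribution} to $\alpha = n$, $\theta = 0$, $c = 0$ (so that the exponential prefactor equals $1$ and $\sqrt{\theta^2+s^2}=s$) reduces the general VG density to exactly the stated $f_{Q_n}$. By Theorem~1 in \citep{MR3942099} --- or, alternatively, by Fourier-inverting $f_{Q_n}$ through the classical integral for $K_\nu$, which returns $(1+s^4t^2)^{-n/2}$ --- the density $f_{Q_n}$ has characteristic function $(1+s^4t^2)^{-n/2}$. Since characteristic functions determine distributions uniquely, this matches the computation above, so $Q_n\sim \mathrm{VG}(n,0,s^2,0)$ with the claimed density.

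Finally, the moments require no work with the density: by independence within and across pairs, $\E[Q_n] = \sum_{i=1}^n \E[W_i]\,\E[Z_i] = 0$, and $\Var[Q_n] = n\,\Var(W_1Z_1) = n\big(\E[W_1^2]\,\E[Z_1^2]-0\big) = n s^4$. The only step requiring external input is recognizing $(1+s^4t^2)^{-n/2}$ as the VG characteristic function; this is precisely what the cited result supplies (or what a one-line Bessel-$K$ Fourier inversion confirms), while everything else is either a conditioning argument or an elementary independence computation.
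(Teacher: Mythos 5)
Your proof is correct and is in fact more self-contained than the paper's treatment: the paper offers no proof of this lemma at all, stating only that it ``is a consequence (for example) of Theorem 1 in \citep{MR3942099}'' and leaving the characteristic function and the moments as easy verifications. Your conditioning argument, $\varphi_{W_1Z_1}(t)=\E\big[e^{-\frac{1}{2}s^2t^2Z_1^2}\big]=(1+s^4t^2)^{-1/2}$ via the $\chi^2_1$ moment generating function, together with the factorization over the independent pairs and the direct computation $\Var[Q_n]=n\,\E[W_1^2]\,\E[Z_1^2]=n s^4$, supplies precisely the verifications the paper omits, and it reduces the role of the cited theorem to a single step: recognizing $(1+s^4t^2)^{-n/2}$ as the characteristic function of the stated Bessel-$K$ density. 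So the essential route is the same (an appeal to Gaunt's Theorem~1), but your version actually proves the ``easy to verify'' claims.

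One step, however, is stated imprecisely. You claim that putting $\alpha=n$, $\theta=0$, $c=0$ in Definition~\ref{def:variance.gamma.distribution}, ``so that $\sqrt{\theta^2+s^2}=s$'', reduces the general VG density to exactly the stated $f_{Q_n}$. Read literally, that substitution produces
\begin{equation}
    \frac{1}{s\sqrt{\pi}\,\Gamma(n/2)}\bigg(\frac{|x|}{2s}\bigg)^{\frac{n-1}{2}}K_{\frac{n-1}{2}}\bigg(\frac{|x|}{s}\bigg),
\end{equation}
which differs from the lemma's $f_{Q_n}$ by the replacement $s\mapsto s^2$ in all three occurrences. To obtain $f_{Q_n}$ you must take the variance-gamma scale parameter equal to $s^2$, i.e., the product of the standard deviations of $W_i$ and $Z_i$, not $s$. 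Your own characteristic-function computation forces this identification: a symmetric variance-gamma law with scale $\sigma$ has characteristic function $(1+\sigma^2t^2)^{-r/2}$, so matching $(1+s^4t^2)^{-n/2}$ requires $\sigma=s^2$. The confusion originates in the paper's own notation --- Definition~\ref{def:variance.gamma.distribution} writes the third slot of $\mathrm{VG}(\alpha,\theta,s^2,c)$ as the square of the density's scale parameter, whereas Gaunt's Theorem~1 places the scale itself in that slot --- but your write-up should state the identification ``scale $=s^2$'' explicitly rather than asserting that the naive substitution already matches. With that one sentence corrected, the proof is complete.
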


    \section{Computing codes}\label{appendix_computing_code}

    The published version of this article (DOI: https://doi.org/10.1515/demo-2021-0120) provides the computing \texttt{R} codes as supplementary material.

\vspace{5mm}
\noindent
{\bf Acknowledgements:} The authors are indebted to two anonymous referees, whose comments led to significant improvements of the manuscript.
G.\ B.\ B.\ acknowledges financial support from UNSW Sydney under a University International Postgraduate Award, from UNSW Business School under a supplementary scholarship, and from the FRQNT (B2).
F.\ O.\ is supported by postdoctoral fellowships from the NSERC (PDF) and the FRQNT (B3X supplement and B3XR). This research includes computations using the computational cluster Katana supported by Research Technology Services at UNSW Sydney.

\vspace{5mm}
\noindent
{\bf Conflict of interest:} The authors declare no conflict of interest.

%
%


\begin{thebibliography}{}

\bibitem[\protect\citename{Avanzi {\em et~al.}, }2021]{MR4208980}
Avanzi, B., Boglioni~Beaulieu, G., Lafaye~de Micheaux, P., Ouimet, F., \& Wong,
  B. 2021.
\newblock A counterexample to the existence of a general central limit theorem
  for pairwise independent identically distributed random variables.
\newblock {\em J. Math. Anal. Appl.}, {\bf 499}(1), 124982.
\newblock \href{http://www.ams.org/mathscinet-getitem?mr=MR4208980}{MR4208980}.

\bibitem[\protect\citename{Balbuena, }2008]{Balbuena_2008}
Balbuena, C. 2008.
\newblock Incidence matrices of projective planes and of some regular bipartite
  graphs of girth 6 with few vertices.
\newblock {\em SIAM J. Discrete Math.}, {\bf 22}(4), 1351--1363.
\newblock \href{http://www.ams.org/mathscinet-getitem?mr=MR2443118}{MR2443118}.

\bibitem[\protect\citename{Biggs, }1993]{Biggs_1993}
Biggs, N. 1993.
\newblock {\em Algebraic graph theory}. Second edn.
\newblock Cambridge Mathematical Library.
\newblock Cambridge University Press, Cambridge.
\newblock \href{http://www.ams.org/mathscinet-getitem?mr=MR1271140}{MR1271140}.

\bibitem[\protect\citename{Billingsley, }1995]{billingsley1995}
Billingsley, P. 1995.
\newblock {\em Probability and measure}. Third edn.
\newblock Wiley Series in Probability and Mathematical Statistics.
\newblock John Wiley \& Sons, Inc., New York.
\newblock \href{http://www.ams.org/mathscinet-getitem?mr=MR1324786}{MR1324786}.

\bibitem[\protect\citename{Boglioni~Beaulieu {\em et~al.},
  }2021]{arXiv:2104.02292}
Boglioni~Beaulieu, G., Lafaye~de Micheaux, P., \& Ouimet, F. 2021.
\newblock Counterexamples to the classical Central Limit Theorem for triplewise
  independent random variables having a common arbitrary margin.
\newblock {\em Preprint},  1--15.
\newblock \href{http://www.https://arxiv.org/abs/2104.02292}{arXiv:2104.02292}.

\bibitem[\protect\citename{B\"{o}ttcher {\em et~al.}, }2019]{bottcher2019}
B\"{o}ttcher, B., Keller-Ressel, M., \& Schilling, R.~L. 2019.
\newblock Distance multivariance: new dependence measures for random vectors.
\newblock {\em Ann. Statist.}, {\bf 47}(5), 2757--2789.
\newblock \href{http://www.ams.org/mathscinet-getitem?mr=MR3988772}{MR3988772}.

\bibitem[\protect\citename{Bradley \& Pruss, }2009]{bradley2009}
Bradley, R.~C., \& Pruss, A.~R. 2009.
\newblock A strictly stationary, {$N$}-tuplewise independent counterexample to
  the central limit theorem.
\newblock {\em Stochastic Process. Appl.}, {\bf 119}(10), 3300--3318.
\newblock \href{http://www.ams.org/mathscinet-getitem?mr=MR2568275}{MR2568275}.

\bibitem[\protect\citename{Chakraborty \& Zhang, }2019]{chakraborty2019}
Chakraborty, S., \& Zhang, X. 2019.
\newblock Distance metrics for measuring joint dependence with application to
  causal inference.
\newblock {\em J. Amer. Statist. Assoc.}, {\bf 114}(528), 1638--1650.
\newblock \href{http://www.ams.org/mathscinet-getitem?mr=MR4047289}{MR4047289}.

\bibitem[\protect\citename{Diestel, }2005]{MR2159259}
Diestel, R. 2005.
\newblock {\em Graph theory}. Third edn.
\newblock Graduate Texts in Mathematics, vol. 173.
\newblock Springer-Verlag, Berlin.
\newblock \href{http://www.ams.org/mathscinet-getitem?mr=MR2159259}{MR2159259}.

\bibitem[\protect\citename{Drton {\em et~al.}, }2020]{drton2020}
Drton, Mathias, Han, Fang, \& Shi, Hongjian. 2020.
\newblock High-dimensional consistent independence testing with maxima of rank
  correlations.
\newblock {\em Ann. Statist.}, {\bf 48}(6), 3206--3227.
\newblock \href{http://www.ams.org/mathscinet-getitem?mr=MR4185806}{MR4185806}.

\bibitem[\protect\citename{Etemadi, }1981]{etemadi1981}
Etemadi, N. 1981.
\newblock An elementary proof of the strong law of large numbers.
\newblock {\em Z. Wahrsch. Verw. Gebiete}, {\bf 55}(1), 119--122.
\newblock \href{http://www.ams.org/mathscinet-getitem?mr=MR606010}{MR606010}.

\bibitem[\protect\citename{Fan {\em et~al.}, }2017]{fan2017}
Fan, Y., Lafaye~de Micheaux, P., Penev, S., \& Salopek, D. 2017.
\newblock Multivariate nonparametric test of independence.
\newblock {\em J. Multivariate Anal.}, {\bf 153}, 189--210.
\newblock \href{http://www.ams.org/mathscinet-getitem?mr=MR3578846}{MR3578846}.

\bibitem[\protect\citename{Gaunt, }2019]{MR3942099}
Gaunt, R.~E. 2019.
\newblock A note on the distribution of the product of zero-mean correlated
  normal random variables.
\newblock {\em Stat. Neerl.}, {\bf 73}(2), 176--179.
\newblock \href{http://www.ams.org/mathscinet-getitem?mr=MR3942099}{MR3942099}.

\bibitem[\protect\citename{Genest {\em et~al.}, }2019]{genest2019}
Genest, C., Ne\v{s}lehov\'{a}, J.~G., R\'{e}millard, B., \& Murphy, O.~A. 2019.
\newblock Testing for independence in arbitrary distributions.
\newblock {\em Biometrika}, {\bf 106}(1), 47--68.
\newblock \href{http://www.ams.org/mathscinet-getitem?mr=MR3912383}{MR3912383}.

\bibitem[\protect\citename{Hall \& Heyde, }1980]{MR624435}
Hall, P., \& Heyde, C.~C. 1980.
\newblock {\em Martingale limit theory and its application}.
\newblock Academic Press, Inc. [Harcourt Brace Jovanovich, Publishers], New
  York-London.
\newblock \href{http://www.ams.org/mathscinet-getitem?mr=MR624435}{MR624435}.

\bibitem[\protect\citename{Jin \& Matteson, }2018]{jin2018}
Jin, Z., \& Matteson, D.~S. 2018.
\newblock Generalizing distance covariance to measure and test multivariate
  mutual dependence via complete and incomplete {V}-statistics.
\newblock {\em J. Multivariate Anal.}, {\bf 168}, 304--322.
\newblock \href{http://www.ams.org/mathscinet-getitem?mr=MR3858367}{MR3858367}.

\bibitem[\protect\citename{Kantorovitz, }2007]{kantorovitz2007}
Kantorovitz, M.~R. 2007.
\newblock An example of a stationary, triplewise independent triangular array
  for which the {CLT} fails.
\newblock {\em Statist. Probab. Lett.}, {\bf 77}(5), 539--542.
\newblock \href{http://www.ams.org/mathscinet-getitem?mr=MR2344639}{MR2344639}.

\bibitem[\protect\citename{Pollard, }2002]{pollard2002}
Pollard, D. 2002.
\newblock {\em A user's guide to measure theoretic probability}.
\newblock Cambridge Series in Statistical and Probabilistic Mathematics, vol.
  8.
\newblock Cambridge University Press, Cambridge.
\newblock \href{http://www.ams.org/mathscinet-getitem?mr=MR1873379}{MR1873379}.

\bibitem[\protect\citename{Pruss, }1998]{pruss1998}
Pruss, A.~R. 1998.
\newblock A bounded {$N$}-tuplewise independent and identically distributed
  counterexample to the {CLT}.
\newblock {\em Probab. Theory Related Fields}, {\bf 111}(3), 323--332.
\newblock \href{http://www.ams.org/mathscinet-getitem?mr=MR1640791}{MR1640791}.

\bibitem[\protect\citename{Rosenblatt, }1956]{Rosenblatt_1956}
Rosenblatt, M. 1956.
\newblock A central limit theorem and a strong mixing condition.
\newblock {\em Proc. Nat. Acad. Sci. U.S.A.}, {\bf 42}, 43--47.
\newblock \href{http://www.ams.org/mathscinet-getitem?mr=MR74711}{MR74711}.

\bibitem[\protect\citename{Takeuchi, }2019]{takeuchi2019}
Takeuchi, K. 2019.
\newblock A family of counterexamples to the central limit theorem based on
  binary linear codes.
\newblock {\em IEICE Transactions on Fundamentals of Electronics,
  Communications and Computer Sciences}, {\bf E102.A}(5), 738--740.
\newblock
  \href{https://doi.org/10.1587/transfun.E102.A.738}{doi:10.1587/transfun.E102.A.738}.

\bibitem[\protect\citename{Tanabe \& Sagae, }1992]{tanabe1992}
Tanabe, K., \& Sagae, M. 1992.
\newblock An exact {C}holesky decomposition and the generalized inverse of the
  variance-covariance matrix of the multinomial distribution, with
  applications.
\newblock {\em J. Roy. Statist. Soc. Ser. B}, {\bf 54}(1), 211--219.
\newblock \href{http://www.ams.org/mathscinet-getitem?mr=MR1157720}{MR1157720}.

\bibitem[\protect\citename{Weakley, }2013]{weakley2013}
Weakley, L.~M. 2013.
\newblock {\em Some strictly stationary, {N}-tuplewise independent
  counterexamples to the central limit theorem}.
\newblock ProQuest LLC, Ann Arbor, MI.
\newblock Thesis (Ph.D.)--Indiana University,
  \href{http://www.ams.org/mathscinet-getitem?mr=MR3167384}{MR3167384}.

\bibitem[\protect\citename{Yao {\em et~al.}, }2018]{yao2018}
Yao, S., Zhang, X., \& Shao, X. 2018.
\newblock Testing mutual independence in high dimension via distance
  covariance.
\newblock {\em J. R. Stat. Soc. Ser. B. Stat. Methodol.}, {\bf 80}(3),
  455--480.
\newblock \href{http://www.ams.org/mathscinet-getitem?mr=MR3798874}{MR3798874}.

\end{thebibliography}

\end{document}